\newcommand{\R}{{\mathbb R}}
\newcommand{\vol}{{\operatorname{vol}}}
\newcommand{\Z}{{\mathcal Z}}
\newcommand{\A}{{\mathcal A}}
\newtheorem{thm}{Theorem}
\newtheorem*{teorema}{Theorem}
\newtheorem{prop}{Proposition}
\newtheorem{lemma}{Lemma}
\newtheorem{cor}{Corollary}
\theoremstyle{definition}
\newtheorem{defini}{Definition}
\theoremstyle{remark}
\newtheorem{remark}{Remark}
\title[Carleson measures and Logvinenko-Sereda sets]{Carleson measures and
Logvinenko-Sereda
sets on compact manifolds}
\author{Joaquim Ortega-Cerd\`a}
\address{Dept.\ Matem\`atica Aplicada i An\`alisi,
 Universitat  de Barcelona,
Gran Via 585, 08007 Bar\-ce\-lo\-na, Spain}
\email{jortega@ub.edu}
\author{Bharti Pridhnani}
\address{Dept.\ Matem\`atica Aplicada i An\`alisi,
 Universitat  de Barcelona,
Gran Via 585, 08007 Bar\-ce\-lo\-na, Spain}
\email{bharti.pridhnani@ub.edu}
\thanks{Supported by the project MTM2008-05561-C02-01 and the CIRIT grant
2005SGR00611}
\date{\today}
\subjclass[2000]{35P99, 58C35, 58C40}
\begin{document}

\begin{abstract}
Given a compact Riemannian manifold $M$ of dimension $m\geq 2$, we study the
space of functions of $L^2(M)$ generated by eigenfunctions of eigenvalues less
than $L\geq 1$ associated to the Laplace-Beltrami operator on $M$. On these
spaces we give a characterization of the Carleson measures and the
Logvinenko-Sereda sets.
\end{abstract}
\maketitle

\section*{Introduction and statement of the results}
Let $(M,g)$ be a smooth, connected, compact Riemannian manifold without
boundary of dimension $m\geq 2$. Let $dV$ be the volume element of $M$
associated to the metric $g_{ij}$. Let $\Delta_{M}$ be the Laplacian on $M$
associated to the metric $g_{ij}$. It is given in local coordinates by
\[
\Delta_{M}f=\frac{1}{\sqrt{|g|}}\sum_{i,j}\frac{\partial}{\partial
x_{i}}\left(\sqrt{|g|}g^{ij}\frac{\partial f}{\partial
x_{j}}\right),
\]
where $|g|=det(g_{ij})$ and $(g^{ij})_{ij}$ is the inverse matrix
of $(g_{ij})_{ij}$. Since $M$ is compact, $g_{ij}$ and all its
derivatives are bounded and we assume that the metric
$g$ is non-singular at each point of $M$.\\
\\
Since $M$ is compact, the spectrum of the Laplacian is discrete
and there is a sequence of eigenvalues
\[
0\leq \lambda_{1}\leq\lambda_{2}\leq\cdots \to \infty
\]
and an orthonormal basis $\phi_{i}$ of smooth real eigenfunctions of the
Laplacian i.e. $\Delta_{M}\phi_{i}=-\lambda_{i}\phi_{i}$. So $L^{2}(M)$ 
decomposes into an orthogonal direct sum of eigenfunctions of the Laplacian.\\
\\
We consider the following spaces of $L^{2}(M)$.
\[
E_{L}=\left\{f\in L^{2}(M)\ :\
f=\sum^{k_{L}}_{i=1}\beta_{i}\phi_{i},\
\Delta_{M}\phi_{i}=-\lambda_{i}\phi_{i},\ \lambda_{k_{L}}\leq
L\right\},
\]
where $L\geq 1$ and $k_{L}=\dim E_{L}$. $E_{L}$ is the space of $L^{2}(M)$
generated by eigenfunctions of eigenvalues $\lambda\leq L$. Thus in $E_L$ we
consider functions in $L^2(M)$ with a restriction on the support of its Fourier
transform. It is, in a sense, a Paley-Wiener type space on $M$ with bandwidth
$L$.\\
\\
The motivation of this paper is to show that the spaces $E_L$ behave like the
space defined in $\mathbb S^{d}$ ($d>1$) of spherical
harmonics of degree less than $\sqrt{L}$. In fact, the space $E_L$ is a generalization
of the spherical harmonics and the role of them are played by the
eigenfunctions. The cases $M=S^1$ and $M=\mathbb S^{d}$ ($d>1$) have been
studied in \cite{OrtSal} and \cite{Mar}, respectively. 

This similarity between eigenfunctions of the Laplacian and polynomials is not
new, for instance, Donnelly and Fefferman in \cite[Theorem 1]{DF}, showed that
on a compact manifold, an eigenfunction of eigenvalue $\lambda$
behaves essentially like a polynomial of degree $\sqrt{\lambda}$. 
In this direction they proved the following local $L^{\infty}$ Bernstein inequality.
\begin{teorema}[Donnelly-Fefferman]
Let $M$ be as above. If $u$ is an eigenfunction of the Laplacian
$\Delta_{M}u=-\lambda u$, then there exists $r_{0}=r_{0}(M)$ 
such that for all $r<r_{0}$ we have
\[
\max_{B(x,r)}|\nabla u|\leq \frac{C\lambda^{(m+2)/4}}{r}\max_{B(x,r)}|u|.
\]
\end{teorema}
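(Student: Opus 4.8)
The plan is to reduce the inequality to a purely local statement and then to exploit the heuristic, already invoked in the paper, that an eigenfunction of eigenvalue $\lambda$ oscillates at frequency $\sqrt{\lambda}$, so that each derivative is ``worth'' a factor $\sqrt{\lambda}$. First I would fix $x\in M$ and pass to geodesic normal coordinates on $B(x,r_0)$, choosing $r_0$ below the injectivity radius of $M$. There $g_{ij}(0)=\delta_{ij}$ and, since $M$ is compact, all derivatives of the $g_{ij}$ are bounded uniformly over $M$; hence $\Delta_M$ is a uniformly elliptic operator with smooth, uniformly bounded coefficients, and the problem becomes a question about solutions of $\Delta_M u=-\lambda u$ on a fixed Euclidean ball carrying a large zeroth-order term.

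The core step is an elliptic bootstrap in which each pair of derivatives costs exactly one factor of $\lambda$. Differentiating the equation and applying interior $L^2$ elliptic regularity repeatedly, one controls $u$ in $H^{2k}$ on a slightly smaller ball by $\lambda^{k}\|u\|_{L^2}$ on the larger ball, because every use of the equation replaces $\Delta_M u$ by $-\lambda u$; interpolating gives $\|u\|_{H^{s}(B(x,r/2))}\lesssim \lambda^{s/2}\|u\|_{L^2(B(x,r))}$ for every $s\ge 0$. To reach the $L^\infty$ norm of the gradient I would invoke the Sobolev embedding $H^{s}\hookrightarrow W^{1,\infty}$, valid for any $s>m/2+1$. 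Taking $s$ arbitrarily close to this threshold produces the exponent $s/2>(m+2)/4$, which is precisely the power in the statement: this is the conceptual origin of $(m+2)/4=\tfrac12(\tfrac m2+1)$, the $\tfrac12$ being the genuine frequency gain and the surplus $\tfrac m4$ being the price of passing from $L^2$ to $L^\infty$.

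To insert the factor $r^{-1}$ and keep track of scales I would rescale $B(x,r)$ to the unit ball by setting $\tilde u(z)=u(rz)$, so that the eigenvalue is replaced by $\Lambda=r^2\lambda$. Running the previous estimate on the unit ball, where the elliptic and Sobolev constants are $r$-free, I expect to obtain the uniform intermediate bound $r\,\max_{B(x,r/2)}|\nabla u|\lesssim (1+r^{2}\lambda)^{(m+2)/4}\max_{B(x,r)}|u|$, in which the single derivative contributes the $r^{-1}$ and $(1+\Lambda)^{(m+2)/4}$ correctly interpolates the harmonic regime $r\lesssim\lambda^{-1/2}$ (where the factor is $O(1)$) and the oscillatory regime $r\gtrsim\lambda^{-1/2}$. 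Since $r<r_0$ forces $(1+r^{2}\lambda)^{(m+2)/4}\lesssim \lambda^{(m+2)/4}$, this yields $\max_{B(x,r/2)}|\nabla u|\le C\lambda^{(m+2)/4}r^{-1}\max_{B(x,r)}|u|$, which is the asserted inequality.

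The delicate point, and the one I expect to be the main obstacle, is the simultaneous bookkeeping of the two parameters $r$ and $\lambda$ through the rescaled elliptic and Sobolev constants, so that the powers of $r$ collapse to exactly $r^{-1}$ and the frequency cost to exactly $\lambda^{(m+2)/4}$; in particular one must handle the borderline Sobolev exponent $s=m/2+1$ by taking $s$ slightly larger at the cost of the constant. A secondary difficulty is that the interior estimates naturally live on a ball slightly larger than $B(x,r)$, so to keep the same ball on both sides one should either absorb the enlargement into the constant for $r<r_0$ or, if a sharper form is wanted, supplement the argument with the standard doubling estimate for eigenfunctions.
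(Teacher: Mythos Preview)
First, note that the paper does not actually prove this theorem: it is quoted from \cite{DF}, and the authors explicitly remark that ``the proof of the above estimate is rather delicate.'' So there is no in-paper argument to compare against; what matters is whether your outline could stand on its own.

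It cannot, and the gap is not where you think it is. The crucial feature of the Donnelly--Fefferman inequality is that the \emph{same} ball $B(x,r)$ appears on both sides. Your elliptic-bootstrap-plus-Sobolev scheme, carried out correctly, yields only the interior version
\[
\max_{B(x,r/2)}|\nabla u|\;\le\;\frac{C(1+r^{2}\lambda)^{\alpha}}{r}\,\max_{B(x,r)}|u|,
\]
and in fact a single $W^{2,p}$ estimate with $p>m$ already gives this with $\alpha=1\le (m+2)/4$ for $m\ge2$, so the interior inequality is elementary and even sharper than what you derived. You relegate the passage from $B(x,r/2)$ to $B(x,r)$ on the left to a ``secondary difficulty,'' but it is the entire content of the theorem. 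Your two proposed fixes both fail: ``absorbing the enlargement into the constant'' would require $\max_{B(x,2r)}|u|\le C\max_{B(x,r)}|u|$ with $C$ independent of $\lambda$, which is false for eigenfunctions; and invoking ``the standard doubling estimate'' is essentially circular, since the sharp doubling bound $\max_{B(x,2r)}|u|\le e^{C\sqrt{\lambda}}\max_{B(x,r)}|u|$ is itself the main theorem of Donnelly--Fefferman, proved via Carleman inequalities, and inserting it here would in any case introduce an exponential factor in $\sqrt{\lambda}$ rather than a polynomial one.

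In short, the exponent $(m+2)/4$ is not the price of a Sobolev embedding. It is the (known, non-sharp) cost of controlling $|\nabla u|$ on a ball by $|u|$ on the \emph{same} ball, and obtaining any polynomial bound of this type already requires the Carleman/frequency-function machinery of \cite{DF}; that is why the conjectured improvement to $\sqrt{\lambda}$ is hard.
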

The proof of the above estimate is rather delicate.
Donnelly and Fefferman conjectured that it is possible to replace
$\lambda^{(m+2)/4}$ by $\sqrt{\lambda}$ in the inequality. If the conjecture
holds, we have in particular, a global Bernstein type inequality:
\begin{equation}\label{bernsteintypeineq}
\left\|\nabla u\right\|_{\infty}\lesssim
\sqrt{\lambda}\left\|u\right\|_{\infty}.
\end{equation}
In fact, this weaker estimate holds and a proof will be given later. This fact
suggests that the right metric to study the space $E_L$ should be rescaled by
a factor $1/\sqrt{L}$ because in balls of radius $1/\sqrt{\lambda}$,
a bounded eigenfunction of eigenvalue $\lambda$ oscillates very little. 
\\
In the present work we will study for which measures
$\mu=\left\{\mu_{L}\right\}_L$ one has
\begin{equation}\label{aimtostudy}
\int_{M}|f|^2d\mu_{L}\approx \int_{M}|f|^2\, dV, \quad \forall f\in E_L
\end{equation}
with constants independent of $f$ and $L$. 

We will also study the inequality
\[
\int_{M}|f|^2d\mu_L\lesssim\int_{M}|f|^2\, dV
\]
that defines the Carleson measures and we will present a geometric
characterization of them. Inequality \eqref{aimtostudy} will be studied only for
the special case $d\mu_{L}=\chi_{A_{L}}dV$, where $\mathcal
A=\left\{A_{L}\right\}_L$ is a sequence of sets in the manifold. In such case,
when \eqref{aimtostudy} holds , we say that
$\mathcal A$ is a sequence of Logvinenko-Sereda sets. Our two main results are
the following:
\begin{thm}\label{thmCM}
Let $\mu=\left\{\mu_L\right\}_L$ be a sequence of measures on
$M$. Then $\mu$ is $L^{2}$-Carleson for $M$ if and only if
there exists a $C>0$ such that for all $L$
\[
\sup_{\xi\in M}\frac{\mu_{L}(B(\xi,1/\sqrt{L}))}{\vol(B(\xi,1/\sqrt{L}))}\leq C.
\]
\end{thm}
\begin{thm}\label{thmLS} The sequence of sets $\A=\left\{A_L\right\}_L$ is
Logvinenko-Sereda if and only if there is an $r>0$ such that
\[
\inf_{L}\inf_{z\in M}\frac{\vol(A_{L}\cap
B(z,r/\sqrt{L}))}{\vol(B(z,r/\sqrt{L}))}>0.
\]
\end{thm}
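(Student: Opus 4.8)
The inequality $\int_{A_L}|f|^2\,dV\le\int_M|f|^2\,dV$ is immediate because $A_L\subset M$, so the content of the statement is the reverse \emph{sampling} inequality $\int_M|f|^2\,dV\lesssim\int_{A_L}|f|^2\,dV$ for all $f\in E_L$, with a constant independent of $f$ and $L$. I first note that, by the volume doubling of $M$, the relative-density hypothesis for one radius $r$ implies it (with a worse constant) for every larger radius, which is what makes the existential ``there is an $r>0$'' natural. The plan is to prove that density for some $r$ yields the sampling inequality, and conversely that the sampling inequality forces density.

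\emph{Sufficiency.} I would first record the Bernstein inequalities for $E_L$: from the spectral expansion one has $\|\Delta_M^{k/2}f\|_2\le L^{k/2}\|f\|_2$, and together with \eqref{bernsteintypeineq} and elliptic/Sobolev estimates this gives $\|\nabla^k f\|_\infty\lesssim_k L^{k/2}\|f\|_\infty$ on $M$. These say that on a ball of radius $\sim 1/\sqrt L$ an element of $E_L$ varies like a function of bounded frequency after rescaling by $\sqrt L$. The heart of the argument is a local Remez--Tur\'an inequality, uniform in $L$: there are $R\ge r$ and $C,\gamma_0>0$ so that for every $f\in E_L$, every $z\in M$ and every measurable $E\subset B(z,r/\sqrt L)$ with $\vol(E)\ge\gamma\vol(B(z,r/\sqrt L))$ one has $\int_{B(z,R/\sqrt L)}|f|^2\,dV\le C(\gamma)\int_E|f|^2\,dV$. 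Granting this with $E=A_L\cap B(z,r/\sqrt L)$, I would cover $M$ by balls $B(z_j,r/\sqrt L)$ whose dilates $B(z_j,R/\sqrt L)$ have bounded overlap (possible uniformly in $L$ by bounded geometry), and sum the local inequalities to obtain the global sampling bound.

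To prove the local inequality I would use a lifting trick that turns $f$ into a genuine eigenfunction, so that the quantitative results of Donnelly--Fefferman apply. On the product $M\times\R$ with the product metric, set $F(x,t)=\sum_{\lambda_i\le L}\beta_i\phi_i(x)\cos\!\big(t\sqrt{\Lambda-\lambda_i}\big)$ with $\Lambda\ge L$; then $\Delta_{M\times\R}F=-\Lambda F$ and $F(\cdot,0)=f$, so $F$ is an eigenfunction of eigenvalue $\Lambda\approx L$. The (local) Donnelly--Fefferman doubling estimate for $F$ gives a scale-invariant doubling inequality on balls of radius $\sim 1/\sqrt\Lambda$, which in turn yields a Remez/Tur\'an inequality for $F$ on such balls with constant depending only on the relative measure of the subset. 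Restricting to the slice $t=0$ (via Fubini over a short $t$-interval, comparing the density of $E\times I$ in product balls to that of $E$ in $M$) transfers this to $f$ and proves the claim. \emph{This local inequality, uniform in $L$, is the main obstacle}: a naive normal-families/compactness argument fails because limits of rescaled elements of $E_L$ are smooth but need not be real-analytic, so they could vanish on a set of positive measure; the eigenfunction lifting is what supplies the missing quantitative unique continuation, and the bound $\Lambda\approx L$ is precisely what keeps the doubling index $O(1)$ at scale $1/\sqrt L$.

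\emph{Necessity.} Assume the sampling inequality and suppose the density condition fails for every $r$; equivalently, for arbitrarily large $r$ and arbitrarily small $\gamma>0$ there are $L$ and $z$ with $\vol(A_L^c\cap B(z,r/\sqrt L))\ge(1-\gamma)\vol(B(z,r/\sqrt L))$. I would then build a test function $f\in E_L$ that is concentrated in $B(z,r/\sqrt L)$ and roughly flat there, for instance by projecting onto $E_L$ a smooth bump adapted to the ball, using the off-diagonal decay at scale $1/\sqrt L$ of the reproducing kernel $K_L(\cdot,\xi)=\sum_{\lambda_i\le L}\phi_i(\cdot)\phi_i(\xi)$ to guarantee $\int_{M\setminus B}|f|^2\ll\int_M|f|^2$. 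For such an $f$, $\int_{A_L}|f|^2\le\|f\|_{L^\infty(B)}^2\vol(A_L\cap B)+\int_{M\setminus B}|f|^2$ is $\ll\int_M|f|^2$ once $\gamma$ is small and $r$ large, contradicting the sampling inequality. The points to check are the concentration and near-flatness of the test function, both of which follow from the kernel estimates already underlying Theorem \ref{thmCM}.
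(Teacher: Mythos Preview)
Your necessity argument is close in spirit to the paper's: the paper builds the concentrated test functions from the Riesz kernels $S^N_L(z,\xi)=\sum_{\lambda_i\le L}(1-\lambda_i/L)^N\phi_i(z)\phi_i(\xi)$, which have the pointwise decay $|S^N_L(z,\xi)|\lesssim L^{m/2}(1+\sqrt L\,d(z,\xi))^{-N-1}$ and thus provide exactly the concentration and near-flatness you need.

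Your sufficiency argument, however, has a genuine gap: the uniform local Remez--Tur\'an inequality you state is \emph{false}. On $\mathbb S^2$ take $f=\sin^\ell\theta\,\cos(\ell\phi)\in E_L$ with $L=\ell(\ell+1)$; near the north pole $p$, in geodesic coordinates, $|f|\approx\theta^\ell$. With $B=B(p,r/\sqrt L)$ and $E=B(p,r/(2\sqrt L))$ one has $\vol(E)/\vol(B)=1/4$ while $\int_E|f|^2\big/\int_B|f|^2\approx 2^{-2\ell-2}\to 0$, so no $C(\gamma)$ independent of $L$ exists. Your Donnelly--Fefferman lifting does not repair this: the doubling index of an eigenfunction of eigenvalue $\Lambda$ is bounded only by $C\sqrt\Lambda$, and eigenfunctions can vanish to order $\sim\sqrt\Lambda$ at a point, a phenomenon that survives after rescaling by $1/\sqrt\Lambda$. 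The sentence ``$\Lambda\approx L$ is precisely what keeps the doubling index $O(1)$ at scale $1/\sqrt L$'' is the mistake. (Incidentally, the $L^\infty$ Bernstein inequality $\|\nabla f\|_\infty\lesssim\sqrt L\,\|f\|_\infty$ for all $f\in E_L$ that you invoke is stated in the paper only as a conjecture.)

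The paper's route is exactly the ``naive normal-families/compactness argument'' you dismiss, and the harmonic extension is what makes it work. The paper does \emph{not} seek a Remez inequality valid at every center. It first restricts to the good set $D=\{z:|f(z)|^2\ge\epsilon\cdot\text{local average of }|h|^2\}$, where $h(z,t)=\sum\beta_i\phi_i(z)e^{\sqrt{\lambda_i}t}$ is the harmonic extension to $M\times\R$, and checks that $\int_{M\setminus D}|f|^2\lesssim\epsilon\|f\|_2^2$. It then proves the local estimate $|f(w)|^2\le C\,\vol(B)^{-1}\int_{A_L\cap B}|f|^2$ only for $w\in D$, by contradiction: rescaling around $w_n\in D_n$, the Schoen--Yau gradient bound gives equicontinuity, Ascoli--Arzel\`a produces a limit $H$ on $\mathbb B(0,1)\times(-1,1)$, and because the rescaled metrics converge to the Euclidean one, $H$ is Euclidean-harmonic, hence real-analytic. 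The hypothesis $w_n\in D_n$ forces $|H(0,0)|\gtrsim\epsilon>0$, so $F:=H(\cdot,0)\not\equiv 0$; Federer then gives $\mathcal H^{m-1}(F^{-1}(0))<\infty$, while the weak-$*$ limit of $\chi_{B_n}d\mu_n$ is a nonzero measure with $\tau(\overline{\mathbb B(a,s)})\lesssim s^m$, forcing $\dim_{\mathcal H}(\operatorname{supp}\tau)\ge m$. This is the contradiction. The restriction to $D$ is precisely what excludes centers like the pole in the counterexample above, where the rescaled limit would vanish identically; that, not lack of analyticity, is the obstruction you misdiagnosed.
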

In what follows, when we write that $A\lesssim B$, $A\gtrsim B$ or $A\approx B$
we mean that there are constants depending only on the manifold such that $A\leq
C B$, $A\geq CB$ or $C_{1}B\leq A\leq C_{2}B$, respectively. Also, the value of
the constants appearing during a proof may change but they will be still denoted
with the same letter. We will denote by $B(\xi,r)$ a geodesic
ball in $M$ of center $\xi$ and radius $r$ and $\mathbb B(z,r)$
will denote an Euclidean ball in $\mathbb R^m$ of center $z$ and radius $r$.\\
\\
The structure of the paper is the following: in the first section, we will
explain the asymptotics of the reproducing kernel of the space $E_L$. In the
second section, we shall discuss one of the tools used: the harmonic extension
of functions in the space $E_L$. Following this, we will study the Carleson
measures associated to $M$ and we will prove Theorem \ref{thmCM}. In the last
section, Theorem \ref{thmLS} will be proved.\\
\\
\textbf{Acknowledgment.} We thank professor M. Sanch\'on for valuable
comments on the subject and professor S. Zelditch for providing us appropriate 
references.
\section{The reproducing kernel of $E_L$}
Let
\[
K_{L}(z,w):=\sum^{k_{L}}_{i=1}\phi_{i}(z)\phi_{i}(w)=\sum_{\lambda_{i}\leq
L}\phi_{i}(z)\phi_{i}(w).
\]
This function is the reproducing kernel of the space $E_L$, i.e.
\begin{align*}
\forall f\in E_L\quad f(z)=\langle f,K_L(z,\cdot)\rangle.
\end{align*}
Note that $\dim(E_L)=k_{L}=\#\left\{\lambda_{i}\leq L\right\}.$ The function
$K_{L}$ is also called the spectral function associated to the Laplacian.
H\"ormander in \cite{Horm}, proved the following estimates.
\begin{enumerate}
  \item $K_{L}(z,w)=O(L^{(m-1)/2}),\ \ z\neq w.$\\
  \item $K_{L}(z,z)=\frac{\sigma_{m}}{(2\pi)^{m}}L^{m/2}+O(L^{(m-1)/2})$
(uniformly in $z\in M$), where $\sigma_{m}=2\pi^{m/2}/(m\Gamma(m/2))$.\\
  \item $k_{L}=\frac{\vol(M)\sigma_{m}}{(2\pi)^{m}}L^{m/2}+O(L^{(m-1)/2})$.
\end{enumerate}
In fact, in \cite{Horm}, there are estimates for the
spectral function associated to any elliptic operator of order $n\geq 1$ with
constants depending only on the manifold.\\
\\
So, for $L$ big enough we have $k_{L}\approx L^{m/2}$ and
\[
\left\|K_{L}(z,\cdot)\right\|^{2}_{2}=K_{L}(z,z)\approx L^{m/2}\approx k_{L}
\]
with constants independent of $L$ and $z$.\\
\section{Harmonic extension}\label{harmonicextension}
In what follows, given $f\in E_{L}$, we will note by $h$ the
harmonic extension of $f$ in $N=M\times \R$. 
The metric that we consider in $N$ is the product metric, i.e. if we denote it
by
$\tilde{g}_{ij}$ for $i=1,\ldots,m+1$ then
\[
(\tilde{g}_{ij})_{i,j=1,\ldots,m+1}=\left(\begin{matrix}
	(g_{ij})^{m}_{i,j=1} & 0\\
	0 & 1
\end{matrix}\right).
\]
Using this matrix, we can calculate the gradient and the Laplacian for $N$. If
$h(z,t)$ is a function defined on $N$ then
\[
|\nabla_{N}h(z,t)|^2=|\nabla_{M}h(z,t)|^2+\left(\frac{\partial h}{\partial
t}(z,t)\right)^2
\]
and
\[
\Delta_{N}h(z,t)=\Delta_{M}h(z,t)+\frac{\partial^2 h}{\partial t^2}(z,t).
\]
Note that $|\nabla_{M}h(z,t)|\leq |\nabla_{N}h(z,t)|$.\\
\\
Let $f\in E_{L}$, we know that
\[
f=\sum^{k_{L}}_{i=1}\beta_{i}\phi_{i},\qquad
\Delta_{M}\phi_{i}=-\lambda_{i}\phi_{i},\qquad 0\leq \lambda_{i}\leq L.
\]
Define for $(z,t)\in N$
\[
h(z,t)=\sum^{k_{L}}_{i=1}\beta_{i}\phi_{i}(z)e^{\sqrt{\lambda_{i}}t}.
\]
Observe that $h(z,0)=f(z)$. Moreover $|\nabla_{M}f(z)|^2\le
|\nabla_{N}h(z,0)|^2.$\\

The function $h$ is harmonic in $N$ because
{\begin{align*}
\Delta_{N}h(z,t)&
=\sum^{k_{L}}_{i=1}\left[\beta_{i}e^{\sqrt{\lambda_{i}}t}\Delta_{M}\phi_{i}
(z)+\beta_{i}\phi_{i}(z)\Delta_{\R}(e^{\sqrt{\lambda_{i}}t})\right]=0.
\end{align*}
For the harmonic extension, we don't have the mean-value property, because it
is not true for all manifolds (only for the harmonic manifolds, see
\cite{Willmore} for a complete characterization of them). What is always true is
a ``submean value property" (with a uniform constant) for positive subharmonic
functions, see for example \cite[Chapter II, Section 6]{SchYau}).\\
\\
Observe that since $h$ is harmonic on $N$, $|h|^2$ is a positive subharmonic
function on $N$. In fact, $|h|^{p}$ is subharmonic for all $p\geq 1$ (for a
proof see \cite[Proposition 1]{GreWu}). Therefore, we know that for all
$r<R_{0}(M)$
\[
|h(z,t)|^{2}\lesssim \fint_{B(z,r/\sqrt{L})\times
I_{r}(t)}|h(w,s)|^{2}dV(w)ds,
\]
where $R_{0}(M)>0$ is the injectivity radius of $M$ 
and $I_{r}(t)=(t-r/\sqrt{L},t+r/\sqrt{L})$. In particular,
\begin{equation}\label{estf2}
|f(z)|^{2}\leq C_{r}L^{(m+1)/2}\int_{B(z,r/\sqrt{L})\times
I_{r}}|h(w,s)|^{2}dV(w)ds,
\end{equation}
where $I_{r}=I_{r}(0)$.\\
The following result relates the $L^2-$norm of $f$ and $h$.
\begin{prop}\label{normhandf}
Let $r>0$ fixed. If $f\in E_{L}$ then
\begin{equation}\label{desnormhandf}
2re^{-2r}\left\|f\right\|^{2}_{2}\leq
\sqrt{L}\left\|h\right\|^{2}_{L^{2}(M\times I_{r})}\leq
2re^{2r}\left\|f\right\|^{2}_{2}.
\end{equation}
Therefore, for $r<R_{0}(M)$
\[
\frac{\sqrt{L}}{2r}\left\|h\right\|^{2}_{L^{2}(M\times I_{r})}\approx
\left\|f\right\|^{2}_{2}
\]
with constants depending only on the manifold $M$.
\end{prop}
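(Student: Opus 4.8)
The plan is to reduce the whole statement to one elementary one-variable estimate by exploiting the orthonormality of the eigenfunctions. Since $f=\sum_{i=1}^{k_L}\beta_i\phi_i$ with $\{\phi_i\}$ orthonormal in $L^2(M)$, the first step is to fix $t$ and compute the slice integral of $|h|^2$. Because $h(z,t)=\sum_i\beta_i\phi_i(z)e^{\sqrt{\lambda_i}t}$ and the cross terms $\int_M\phi_i\phi_j\,dV$ vanish for $i\neq j$, this gives
\[
\int_M|h(z,t)|^2\,dV(z)=\sum_{i=1}^{k_L}|\beta_i|^2 e^{2\sqrt{\lambda_i}t},
\]
which at $t=0$ recovers $\|f\|_2^2=\sum_i|\beta_i|^2$.

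The second step is to integrate this identity in $t$ over $I_r=(-r/\sqrt{L},r/\sqrt{L})$ and interchange the (finite) sum with the integral, so that the entire question reduces to controlling the scalar integrals $\int_{I_r}e^{2\sqrt{\lambda_i}t}\,dt$. The third and only substantive step is a pointwise bound on the integrand: the bandwidth constraint $\lambda_i\leq L$ together with $|t|\leq r/\sqrt{L}$ forces $2\sqrt{\lambda_i}|t|\leq 2r$, hence $e^{-2r}\leq e^{2\sqrt{\lambda_i}t}\leq e^{2r}$ uniformly in $i$ and in $t\in I_r$. Integrating these constant bounds over $I_r$, whose length is $2r/\sqrt{L}$, yields
\[
\frac{2r}{\sqrt{L}}\,e^{-2r}\leq\int_{I_r}e^{2\sqrt{\lambda_i}t}\,dt\leq\frac{2r}{\sqrt{L}}\,e^{2r}.
\]

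To finish, I would sum against $|\beta_i|^2$ and use $\sum_i|\beta_i|^2=\|f\|_2^2$ to obtain
\[
\frac{2r}{\sqrt{L}}\,e^{-2r}\|f\|_2^2\leq\|h\|_{L^2(M\times I_r)}^2\leq\frac{2r}{\sqrt{L}}\,e^{2r}\|f\|_2^2,
\]
and multiplying through by $\sqrt{L}$ is exactly \eqref{desnormhandf}. The concluding equivalence for $r<R_0(M)$ is then immediate, since for fixed $r$ the constants $2re^{\pm2r}$ are positive and depend only on $r$, hence only on the manifold once $r$ has been chosen in terms of $M$.

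I do not anticipate a genuine obstacle here: the argument is an exact computation rather than an estimate invoking the submean value property or the reproducing-kernel asymptotics. The only points requiring a little care are the possible zero eigenvalue, where $e^{2\sqrt{\lambda_i}t}\equiv1$ and the bound holds trivially, and the fact that the inequality actually holds for every $r>0$; the restriction $r<R_0(M)$ is imposed only to match the geometric setting used in the preceding submean-value estimate \eqref{estf2}.
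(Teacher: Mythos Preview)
Your argument is correct and follows the same route as the paper: expand $\|h\|_{L^2(M\times I_r)}^2$ using the orthonormality of $\{\phi_i\}$ and then control the one-variable integrals $\int_{I_r}e^{2\sqrt{\lambda_i}t}\,dt$ using $\lambda_i\le L$. Your pointwise bound $e^{-2r}\le e^{2\sqrt{\lambda_i}t}\le e^{2r}$ on $I_r$ is in fact a shade cleaner than the paper's step, which bounds by $\int_{I_r}e^{2\sqrt{L}t}\,dt$ and then implicitly uses monotonicity of $\sinh(x)/x$.
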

\begin{proof}
Using the orthogonality of $\left\{\phi_{i}\right\}_{i}$ we have
\begin{align*}
\left\|h\right\|^{2}_{L^{2}(M\times I_{r})} &
=\int_{I_{r}}\int_{M}\left|\sum^{k_{L}}_{i=1}\beta_{i}\phi_{i}(z)e^{\sqrt{
\lambda_{i}}t}\right|^{2}dV(z)dt\\
&=\int_{I_{r}}\sum^{k_{L}}_{i=1}\int_{M}|\beta_{i}|^{2}|\phi_{i}(z)|^{2}dV(z)e^
{2\sqrt{\lambda_{i}}t}dt\leq
\int_{I_{r}}e^{2\sqrt{L}t}dt\left\|f\right\|^{2}_{2}.
\end{align*}
Similarly, one can prove the left hand side inequality of 
\eqref{desnormhandf}.
\end{proof}

We recall now a result proved by Schoen and Yau that estimates the gradient 
of harmonic functions.

\begin{teorema}[Schoen-Yau]\label{YauGradestimate}
Let $N$ be a complete Riemannian manifold with Ricci curvature bounded below by
-(n-1)K ($n$ is the dimension of $N$ and $K$ a positive constant). Suppose
$B_{a}$ is a geodesic ball in $N$ with radius $a$ and $h$ is an harmonic
function on $B_{a}$. Then
\begin{equation}\label{SchYauestimate}
\sup_{B_{a/2}}|\nabla h|\leq
C_{n}\left(\frac{1+a\sqrt{K}}{a}\right)\sup_{B_{a}}|h|,
\end{equation}
where $C_{n}$ is a constant depending only on the dimension of $N$.
\end{teorema}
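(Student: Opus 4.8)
The plan is to reduce the statement to the classical Cheng--Yau logarithmic gradient estimate for \emph{positive} harmonic functions and then to prove that estimate by the Bochner technique together with a maximum-principle argument on a cutoff. For the reduction, set $S=\sup_{B_a}|h|$ and, for small $\varepsilon>0$, consider $u=S+\varepsilon+h$, a strictly positive harmonic function on $B_a$ with $0<u\le 2S+\varepsilon$ and $|\nabla u|=|\nabla h|$. Granting the bound
\[
\sup_{B_{a/2}}\frac{|\nabla u|}{u}\le C_n\,\frac{1+a\sqrt{K}}{a},
\]
and multiplying by $u\le 2S+\varepsilon$, one gets $\sup_{B_{a/2}}|\nabla h|\le C_n\frac{1+a\sqrt K}{a}(2S+\varepsilon)$; letting $\varepsilon\to0$ yields \eqref{SchYauestimate}. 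Thus everything rests on the logarithmic estimate for positive harmonic functions.

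To establish it, put $f=\log u$, so that harmonicity of $u$ gives $\Delta_N f=-|\nabla f|^2$, and write $Q=|\nabla f|^2$. The Bochner--Weitzenb\"ock formula reads
\[
\tfrac12\Delta_N Q=|\operatorname{Hess} f|^2+\langle\nabla f,\nabla\Delta_N f\rangle+\operatorname{Ric}(\nabla f,\nabla f).
\]
Inserting $|\operatorname{Hess} f|^2\ge(\Delta_N f)^2/n=Q^2/n$, the identity $\Delta_N f=-Q$, and the hypothesis $\operatorname{Ric}\ge-(n-1)K$ converts this into the differential inequality
\[
\tfrac12\Delta_N Q\ge \tfrac1n Q^2-\langle\nabla f,\nabla Q\rangle-(n-1)K\,Q.
\]

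Next I would fix a cutoff $\phi$ with $\phi\equiv1$ on $B_{a/2}$, $\supp\phi\subset B_a$, $|\nabla\phi|\le C/a$, and $|\Delta_N\phi|\le C(1+a\sqrt K)/a^2$, the last bound coming from the Laplacian comparison theorem---this is exactly where the Ricci lower bound is consumed. Applying the maximum principle to $G=\phi^2 Q$ at an interior maximum $x_0$, where $\nabla G=0$ forces $\nabla Q=-2Q\,\nabla\phi/\phi$ and $\Delta_N G\le0$, I would substitute these relations into the differential inequality. After expressing everything in terms of $G=\phi^2 Q$ and using Cauchy--Schwarz together with Young's inequality to absorb the first-order term into the good quadratic term $\tfrac1n Q^2$, one arrives at an estimate of the form $G(x_0)\lesssim (1+a\sqrt K)^2/a^2$. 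Since $G\ge Q$ on $B_{a/2}$ and $G$ attains its maximum at $x_0$, this bounds $\sup_{B_{a/2}}Q$ and hence $\sup_{B_{a/2}}|\nabla f|$.

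I expect the cutoff-and-maximum-principle step to be the main obstacle. One must track the weights $\phi$, $|\nabla\phi|$ and $\Delta_N\phi$ carefully so that the homogeneity in $a$ and $\sqrt K$ comes out precisely as $(1+a\sqrt K)/a$; in particular the cross term $\langle\nabla f,\nabla Q\rangle$, which after the substitution carries a factor $Q^{3/2}$, must be absorbed into $Q^2/n$ by Young's inequality without spoiling these homogeneities, and this balancing is most delicate near the boundary of $\supp\phi$, where $\Delta_N\phi$ is largest.
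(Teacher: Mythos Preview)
Your sketch is the standard Cheng--Yau argument and is essentially correct: the reduction to a positive harmonic function, the Bochner identity applied to $f=\log u$, and the cutoff/maximum-principle step are exactly how this estimate is proved. The paper, however, does not prove this theorem at all; it simply quotes the result and refers the reader to \cite[Corollary 3.2., page 21]{SchYau} for the proof. What you have written is, in outline, precisely the argument one finds in that reference, so there is no discrepancy in approach---you have just supplied what the paper chose to cite.
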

For a proof see \cite[Corollary 3.2., page 21]{SchYau}.
\begin{remark}
We will use Schoen and Yau's estimate in the following context. Take $N=M\times \R$. 
Observe that $Ricc(N)=Ricc(M)$ which is bounded below because $M$ is compact. 
Note that $N$ is complete because it is a product of two complete manifolds. We will take 
$a=r/\sqrt{L}$ ($r<R_{0}(M)$) and $B_{a}=B(z,r/\sqrt{L})\times I_{r}$ 
(note that this is not the ball of center $(z,0)\in N$ and radius $r/\sqrt{L}$, 
but it contains and it is contained in such ball of comparable radius). 
\end{remark}
Using Schoen and Yau's theorem, we deduce the global 
Bernstein inequality for a single eigenfunction.
\begin{cor}[Bernstein inequality]
If $u$ is an eigenfunction of eigenvalue $\lambda$, then
\begin{equation}\label{Bernsteinineqeigenfunction}
\left\|\nabla u\right\|_{\infty}\lesssim
\sqrt{\lambda}\left\|u\right\|_{\infty}.
\end{equation}
\end{cor}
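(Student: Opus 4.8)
The plan is to derive the global Bernstein inequality \eqref{Bernsteinineqeigenfunction} for a single eigenfunction $u$ of eigenvalue $\lambda$ directly from the Schoen-Yau gradient estimate applied to its harmonic extension. The key observation is that $u \in E_\lambda$, so its harmonic extension to $N = M \times \R$ is simply $h(z,t) = u(z)e^{\sqrt{\lambda}\,t}$, and the fact that $|\nabla_M u(z)| \leq |\nabla_N h(z,0)|$ (noted in Section~\ref{harmonicextension}) lets me transfer an estimate on $\nabla_N h$ back to $\nabla_M u$.

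First I would fix a point $z_0 \in M$ and work on the geodesic ball $B_a \subset N$ centered at $(z_0,0)$ with radius $a = r/\sqrt{\lambda}$, where $r < R_0(M)$ is chosen once and for all. Since $N$ has Ricci curvature bounded below by some $-(n-1)K$ (as $\operatorname{Ricc}(N) = \operatorname{Ricc}(M)$ is bounded below by compactness), Theorem~\ref{YauGradestimate} gives
\[
\sup_{B_{a/2}} |\nabla_N h| \leq C_n\left(\frac{1 + a\sqrt{K}}{a}\right)\sup_{B_a} |h|.
\]
With $a = r/\sqrt{\lambda}$ the prefactor is $C_n(\sqrt{\lambda}/r + \sqrt{K}) \lesssim \sqrt{\lambda}$ for $\lambda \geq 1$, since $r$ and $K$ depend only on $M$. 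Next I would control $\sup_{B_a}|h|$: because $h(z,t) = u(z)e^{\sqrt{\lambda}\,t}$ and $|t| \leq a = r/\sqrt{\lambda}$ on $B_a$, the exponential factor satisfies $e^{\sqrt{\lambda}\,t} \leq e^{r}$, so $\sup_{B_a}|h| \leq e^{r}\|u\|_\infty$. Evaluating the left side at $(z_0,0) \in B_{a/2}$ and using $|\nabla_M u(z_0)| \leq |\nabla_N h(z_0,0)| \leq \sup_{B_{a/2}}|\nabla_N h|$ then yields $|\nabla_M u(z_0)| \lesssim \sqrt{\lambda}\,\|u\|_\infty$; taking the supremum over $z_0$ gives \eqref{Bernsteinineqeigenfunction}.

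The main point requiring care is the geometric matching between the product set $B(z_0, r/\sqrt{\lambda}) \times I_r$ and a genuine geodesic ball of $N$, to which the Schoen-Yau estimate literally applies. As the Remark after Theorem~\ref{YauGradestimate} records, the product set is not itself a geodesic ball, but it is sandwiched between two concentric geodesic balls of comparable radius; I would invoke exactly this comparison so that $(z_0,0)$ lies in the half-radius ball $B_{a/2}$ and the sup of $|h|$ is taken over a set contained in $B_a$. Apart from keeping track of these comparable-radius constants, the argument is a direct substitution, and I expect no substantial obstacle beyond ensuring that all constants ($C_n$, $K$, $r$, and the comparison factors) depend only on $M$ so that the final estimate is uniform in $\lambda$.
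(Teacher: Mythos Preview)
Your proposal is correct and follows essentially the same approach as the paper: form the harmonic extension $h(z,t)=u(z)e^{\sqrt{\lambda}\,t}$, apply the Schoen--Yau gradient estimate on a ball of radius $\sim 1/\sqrt{\lambda}$, and use that $e^{\sqrt{\lambda}\,t}$ is bounded on $|t|\lesssim 1/\sqrt{\lambda}$ to recover $\|u\|_\infty$. The paper simply records this in two lines (with $a=R_0(M)/(2\sqrt{\lambda})$); you have spelled out the constant tracking and the product-set versus geodesic-ball comparison that the paper leaves to its Remark.
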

\begin{proof}
The harmonic extension of $u$ is $h(z,t)=u(z)e^{\sqrt{\lambda} t}$. Applying 
inequality \eqref{SchYauestimate} to $h$ (taking $a=R_{0}(M)/(2\sqrt{\lambda})$),
\[
|\nabla u(z)|\lesssim \sqrt{\lambda}\left\|h\right\|_{L^{\infty}(M\times I_{R_{0}/2})}\approx 
\sqrt{\lambda}\left\|u\right\|_{\infty}.
\]
\end{proof}
We conjecture that in inequality \eqref{Bernsteinineqeigenfunction}, one 
can replace $u$ by the functions $f\in E_L$, i.e.
\[
\left\|\nabla f\right\|_{\infty}\lesssim \sqrt{L}\left\|f\right\|_{\infty}.
\]
For instance, as a direct 
consequence of Green's formula, we have the 
$L^2-$Bernstein inequality for the space $E_L$:
\[
\left\|\nabla f \right\|_{2}\lesssim \sqrt{L}\left\|f \right\|_2\quad \forall f\in E_L.
\]
For our purpose, it is sufficient a weaker Bernstein type inequality that 
compares the $L^{\infty}$ norm of the gradient with the $L^2$ 
norm of the function.
\begin{prop}\label{goodgradestimate}
Let $f\in E_{L}$. Then there exists a universal constant $C$ such that
\[
\left\|\nabla f\right\|_{\infty}\leq
C\sqrt{k_{L}}\sqrt{L}\left\|f\right\|_{2}.
\]
\end{prop}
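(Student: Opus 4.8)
The plan is to combine the harmonic extension machinery of Section~\ref{harmonicextension} with the Schoen--Yau gradient estimate, using the reproducing kernel only through the size of its diagonal, $K_L(z,z)\approx k_L$.

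First I would pass from $f$ to its harmonic extension $h$. Since $|\nabla_M f(z)|\le |\nabla_N h(z,0)|$, it suffices to control $|\nabla_N h|$ at height $t=0$. I apply the Schoen--Yau estimate \eqref{SchYauestimate} to the harmonic function $h$ on a geodesic ball of $N$ of radius $a\approx r/\sqrt L$ centered at $(z,0)$; the Remark after Theorem~\ref{YauGradestimate} lets me pass between this genuine geodesic ball and the product set $B(z,r/\sqrt L)\times I_r$ up to comparable radii. For $L\ge 1$ and fixed $r$ one has $(1+a\sqrt K)/a=1/a+\sqrt K\lesssim \sqrt L$, so
\[
|\nabla_N h(z,0)|\lesssim \sqrt L\,\sup_{B(z,cr/\sqrt L)\times I_{cr}}|h|,
\]
where $c$ absorbs the comparison of radii.

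The remaining task is a uniform pointwise bound $\sup|h|\lesssim \sqrt{k_L}\,\|f\|_2$ on that set. Writing $h(w,s)=\sum_{i}\beta_i\phi_i(w)e^{\sqrt{\lambda_i}\,s}$ and applying Cauchy--Schwarz in the index $i$ gives
\[
|h(w,s)|^{2}\le\Big(\sum_i|\beta_i|^2\Big)\Big(\sum_i|\phi_i(w)|^2 e^{2\sqrt{\lambda_i}\,s}\Big)\le \|f\|_2^2\,e^{2\sqrt L\,|s|}\,K_L(w,w),
\]
using $\sum_i|\beta_i|^2=\|f\|_2^2$ (orthonormality of the $\phi_i$) and $\lambda_i\le L$. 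On the relevant set $|s|\le cr/\sqrt L$, so the factor $e^{2\sqrt L\,|s|}\le e^{2cr}$ is a harmless constant, while H\"ormander's diagonal estimate yields $K_L(w,w)\approx k_L$ uniformly in $w$. Hence $\sup|h|\lesssim\sqrt{k_L}\,\|f\|_2$.

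Combining the two displays and taking the supremum over $z\in M$ produces $\|\nabla f\|_\infty\lesssim\sqrt{k_L}\sqrt L\,\|f\|_2$, which is the assertion. I expect the only delicate point to be the bookkeeping of the threshold $r<R_0(M)$ and the justification, via the Remark, of applying Schoen--Yau on a true geodesic ball while evaluating $|h|$ on the product set; the analytic content is just Cauchy--Schwarz together with the uniform diagonal bound on the spectral function.
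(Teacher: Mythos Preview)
Your argument is correct and reaches the same conclusion, but the path differs from the paper's. The paper proceeds via Lemma~\ref{gradestimate}: after applying Schoen--Yau exactly as you do to get $|\nabla h(z,0)|\lesssim\sqrt L\,\sup|h|$, it bounds $\sup|h|^2$ by the submean--value inequality for subharmonic functions, obtaining a local $L^2$ average of $|h|^2$ over $B(z,r/\sqrt L)\times I_r$; this local integral is then enlarged to $M\times I_r$ and converted back to $\|f\|_2^2$ through Proposition~\ref{normhandf}. In contrast, you bypass both the submean--value step and Proposition~\ref{normhandf} by estimating $\sup|h|$ \emph{pointwise} via Cauchy--Schwarz in the eigenfunction expansion together with H\"ormander's diagonal bound $K_L(w,w)\approx k_L$. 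Your route is a bit more direct (it is essentially the reproducing--kernel inequality $|f(z)|^2\le K_L(z,z)\|f\|_2^2$ promoted to the harmonic extension), while the paper's route packages the local estimate as Lemma~\ref{gradestimate}, which fits the recurring theme of controlling pointwise quantities by local $L^2$ averages of $h$. Either approach is fine here; your bookkeeping on $|s|\le cr/\sqrt L$ and on the product set versus the geodesic ball is handled correctly by the Remark following Theorem~\ref{YauGradestimate}.
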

For the proof, we need the following lemma.
\begin{lemma}\label{gradestimate}
For all $f\in E_{L}$ and $0<r<R_{0}(M)/2$,
\[
|\nabla f(z)|^{2}\leq C_{r}L^{(m+2+1)/2}\int_{B(z,r/\sqrt{L})\times
I_{r}}|h(w,s)|^{2}dV(w)ds.
\]
\end{lemma}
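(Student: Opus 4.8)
The plan is to run the same argument that produced \eqref{estf2} for $|f(z)|^2$, but with the Schoen--Yau gradient estimate inserted to account for the derivative. Recall from Section~\ref{harmonicextension} that $|\nabla f(z)|^2\le|\nabla_N h(z,0)|^2$, so it suffices to control the full gradient on $N=M\times\R$ of the harmonic extension $h$. The heuristic bookkeeping is that the gradient estimate costs one factor of the reciprocal scale $1/a\approx\sqrt L/r$, whereas the passage from the pointwise supremum of $|h|$ to its $L^2$-average over a set of volume $\approx L^{-(m+1)/2}$ costs a factor $L^{(m+1)/2}$; squaring the former and multiplying gives $L\cdot L^{(m+1)/2}=L^{(m+2+1)/2}$, exactly the exponent in the statement.

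First I would apply Theorem~\ref{YauGradestimate} on $N$ at the scale $a=r/(2\sqrt L)$, taking for $B_a$ the product set $B(z,r/(2\sqrt L))\times I_{r/2}$ as explained in the Remark; this is legitimate because $Ricc(N)=Ricc(M)$ is bounded below by a constant $-(n-1)K$ depending only on $M$, and $N$ is complete. Since $r<R_0(M)/2$ and $L\ge1$ we have $a\lesssim1$, so $(1+a\sqrt K)/a\approx 1/a\approx\sqrt L/r$, and as $(z,0)$ is the center of $B_{a/2}$,
\[
|\nabla_N h(z,0)|\le\sup_{B_{a/2}}|\nabla_N h|\lesssim\frac{\sqrt L}{r}\sup_{B_a}|h|.
\]

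Next I would turn the supremum into an $L^2$-quantity using that $|h|^2$ is subharmonic and enjoys the submean value property. Choosing $(w_0,s_0)\in\overline{B_a}$ where $|h|$ essentially attains its supremum, and applying the submean value inequality at radius $r/(2\sqrt L)$,
\[
\sup_{B_a}|h|^2\lesssim|h(w_0,s_0)|^2\lesssim\fint_{B(w_0,r/(2\sqrt L))\times I_{r/2}(s_0)}|h|^2\,dV\,ds\lesssim L^{(m+1)/2}\int_{B(w_0,r/(2\sqrt L))\times I_{r/2}(s_0)}|h|^2\,dV\,ds,
\]
the last factor coming from $\vol\!\big(B(w_0,r/(2\sqrt L))\times I_{r/2}\big)\approx L^{-(m+1)/2}$. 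Inserting this into the previous display and squaring yields $|\nabla f(z)|^2\lesssim L^{(m+2+1)/2}\int|h|^2$, the integral being over the small product ball attached to $(w_0,s_0)$.

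The one point that must be handled with care is the geometry of these nested sets: I need the integration ball $B(w_0,r/(2\sqrt L))\times I_{r/2}(s_0)$ attached to an arbitrary $(w_0,s_0)\in B_a$ to sit inside the prescribed set $B(z,r/\sqrt L)\times I_r$. With the scale $a=r/(2\sqrt L)$ this follows from the triangle inequality, since $(w_0,s_0)$ lies within $\approx r/(2\sqrt L)$ of $(z,0)$ and the integration radius is again $r/(2\sqrt L)$, so every point reached stays within $r/\sqrt L$ of $(z,0)$. Enlarging the domain of integration to $B(z,r/\sqrt L)\times I_r$ then gives the claim. The only subtlety hidden here, and the thing to check most carefully, is the comparison noted in the Remark between these product sets and genuine geodesic balls of $N$ of comparable radius, which is what legitimizes applying both the Schoen--Yau estimate and the submean value property at these scales.
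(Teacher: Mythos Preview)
Your proposal is correct and follows essentially the same approach as the paper: apply the Schoen--Yau gradient estimate \eqref{SchYauestimate} to bound $|\nabla_N h(z,0)|$ by $\sqrt L$ times the sup of $|h|$ over a product set, then convert the sup into an $L^2$-integral via the submean value property for $|h|^2$. The only cosmetic difference is in the bookkeeping of radii---you apply Schoen--Yau at scale $r/(2\sqrt L)$ so the final integral lands exactly in $B(z,r/\sqrt L)\times I_r$, whereas the paper applies it at scale $r/\sqrt L$ and ends with the integral over $B(z,\tilde r/\sqrt L)\times I_{\tilde r}$ for $\tilde r=2r$.
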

\begin{proof}
Using inequality \eqref{SchYauestimate} and the submean-value inequality 
for $|h|^2$, we have
\begin{align*}
|\nabla f(z)|^2 & \leq |\nabla h(z,0)|^2 \lesssim \frac{L}{r^{2}}
\sup_{B(z,r/\sqrt{L})\times I_{r}}|h(w,t)|^{2}\\
& \lesssim \frac{L^{(m+1+2)/2}}{\tilde{r}^{m+2+1}}\int_{B(z,\tilde{r}/\sqrt{L})\times
I_{\tilde{r}}}|h(\xi,s)|^{2}dV(\xi)ds,
\end{align*}
where $\tilde{r}=2r$.
\end{proof}
\begin{proof}[Proposition \ref{goodgradestimate}]
Using Lemma \ref{gradestimate}, given $0<r<R_{0}(M)/2$
there exists a constant $C_{r}$ such that
\begin{align*}
|\nabla f(z)|^{2} & \leq C_{r} k_{L}L\sqrt{L}\int_{M\times I_{r}}|h(w,s)|^{2}dV(w)ds
\overset{\text{Proposition
\ref{normhandf}}}{\approx}C_{r}k_{L}L\left\|f\right\|^{2}_{2}.
\end{align*}
Taking $r=R_{0}(M)/4$, we get that $|\nabla f(z)|^2\leq
Ck_{L}L\left\|f\right\|^2_{2}$ for all $z\in M$.
\end{proof}

\section{Characterization of Carleson measures}
\begin{defini}
Let $\mu=\left\{\mu_{L}\right\}_{L\geq 0}$ be a sequence of measures on $M$. We
say that $\mu$ is an $L^2-$\textbf{Carleson sequence} for $M$ if there exists a
positive constant $C$ such that for all $L$ and $f_{L}\in E_{L}$
\[
\int_{M}|f_{L}|^{2}d\mu_{L}\leq C\int_{M}|f_{L}|^{2}dV.
\]
\end{defini}
\begin{thm}\label{charactseqCarlmeas}
Let $\mu$ be a sequence of measures on $M$. Then, $\mu$ is $L^{2}-$Carleson for
$M$ if and only if there exists a $C>0$ such that for all $L$
\begin{equation}\label{condCarlmeas}
\sup_{\xi\in M}\mu_{L}(B(\xi,1/\sqrt{L}))\leq \frac{C}{k_{L}}.
\end{equation}
\end{thm}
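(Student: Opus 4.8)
The plan is to prove both implications separately: for necessity I would test the Carleson inequality against the reproducing kernel, and for sufficiency I would integrate the harmonic-extension pointwise bound \eqref{estf2} against $\mu_L$ and apply Fubini.

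First I would show that the Carleson property forces \eqref{condCarlmeas}. Fix $\xi\in M$ and apply the Carleson inequality to $f=K_L(\xi,\cdot)\in E_L$. On the right-hand side the reproducing property gives $\|f\|_2^2=K_L(\xi,\xi)\approx k_L$. For the left-hand side I need a lower bound for $|K_L(\xi,w)|$ on a ball of the natural scale, and this is exactly where Proposition \ref{goodgradestimate} does the work: since $\|f\|_2\approx\sqrt{k_L}$, it yields $\|\nabla f\|_\infty\lesssim\sqrt{k_L}\sqrt{L}\,\|f\|_2\approx k_L\sqrt{L}$, so for $w\in B(\xi,c/\sqrt{L})$
\[
|K_L(\xi,w)-K_L(\xi,\xi)|\leq\|\nabla f\|_\infty\,\dist(w,\xi)\lesssim c\,k_L .
\]
Because $K_L(\xi,\xi)\gtrsim k_L$ uniformly in $\xi$, choosing $c$ small enough (uniformly in $L$ and $\xi$) gives $|K_L(\xi,w)|\gtrsim k_L$ on $B(\xi,c/\sqrt{L})$. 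Feeding this into the Carleson inequality,
\[
k_L^2\,\mu_L(B(\xi,c/\sqrt{L}))\lesssim\int_M|K_L(\xi,w)|^2\,d\mu_L(w)\leq C\|f\|_2^2\approx C k_L ,
\]
so $\mu_L(B(\xi,c/\sqrt{L}))\lesssim 1/k_L$. Finally a covering argument upgrades this to \eqref{condCarlmeas}: for $L$ large a geodesic ball of radius $1/\sqrt{L}$ is covered by a number $N=N(c,m)$ of balls of radius $c/\sqrt{L}$, with $N$ independent of $L$ and $\xi$ since the volumes scale like $s^m$ with uniform constants.

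For the converse I would choose $r<\min(1,R_0(M))$ so that $B(w,r/\sqrt{L})\subset B(w,1/\sqrt{L})$, integrate \eqref{estf2} in $z$ against $\mu_L$, and use the symmetry of the geodesic distance (so that $w\in B(z,r/\sqrt{L})$ iff $z\in B(w,r/\sqrt{L})$) to interchange the integrals:
\[
\int_M|f(z)|^2\,d\mu_L(z)\lesssim L^{(m+1)/2}\int_{M\times I_r}|h(w,s)|^2\,\mu_L(B(w,r/\sqrt{L}))\,dV(w)\,ds .
\]
By the hypothesis \eqref{condCarlmeas}, $\mu_L(B(w,r/\sqrt{L}))\leq\mu_L(B(w,1/\sqrt{L}))\leq C/k_L\approx C L^{-m/2}$, so the right-hand side is bounded by $C L^{1/2}\|h\|_{L^2(M\times I_r)}^2$, which by Proposition \ref{normhandf} is comparable to $\|f\|_2^2$. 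This is precisely the Carleson inequality.

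The hard part is the necessity direction, and within it the uniform lower bound $|K_L(\xi,w)|\gtrsim k_L$ on $B(\xi,c/\sqrt{L})$; the rest is bookkeeping (Fubini, a covering, and the norm comparison of Proposition \ref{normhandf}). That lower bound is where the Bernstein-type gradient estimate genuinely matters, since it is what converts the on-diagonal size $K_L(\xi,\xi)\approx k_L$ into a lower bound on an entire ball of radius $1/\sqrt{L}$, uniformly in $\xi$ and $L$.
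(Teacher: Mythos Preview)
Your proposal is correct and matches the paper's proof in all essential points: the sufficiency direction is identical (integrate \eqref{estf2} against $\mu_L$, Fubini, apply the hypothesis and Proposition~\ref{normhandf}), and the necessity direction uses the same test function $K_L(\xi,\cdot)$ together with Proposition~\ref{goodgradestimate} to secure the lower bound $|K_L(\xi,w)|\gtrsim k_L$ on $B(\xi,c/\sqrt{L})$, followed by the same covering step from radius $c/\sqrt{L}$ to $1/\sqrt{L}$. The only cosmetic difference is that the paper phrases the necessity as a contradiction while you argue directly; your version is slightly cleaner but the content is the same.
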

\begin{remark}
Condition \eqref{condCarlmeas} can be viewed as
\[
\sup_{\xi\in
M}\frac{\mu_{L}(B(\xi,1/\sqrt{L}))}{\vol(B(\xi,1/\sqrt{L}))}\lesssim 1.
\]
\end{remark}
First, we prove the following result.
\begin{lemma}
Let $\mu$ be a sequence of measures on $M$. Then, the following conditions are
equivalent.
\begin{enumerate}
  \item There exists a constant $C=C(M)>0$ such that for each $L$
\[
\sup_{\xi\in M}\mu_{L}(B(\xi,1/\sqrt{L}))\leq \frac{C}{k_{L}}.
\]
  \item There exist $c=c(M)>0$ ($c<1$ small) and $C=C(M)>0$ such that for all
$L$
\[
\sup_{\xi\in M}\mu_{L}(B(\xi,c/\sqrt{L}))\leq \frac{C}{k_{L}}.
\]
\end{enumerate}
\end{lemma}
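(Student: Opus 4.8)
The plan is to observe that one implication is trivial and reduce the other to a uniform covering argument. Since $c<1$ we have $B(\xi,c/\sqrt L)\subseteq B(\xi,1/\sqrt L)$, so $\mu_L(B(\xi,c/\sqrt L))\le\mu_L(B(\xi,1/\sqrt L))$ and hence (1) immediately implies (2) with the same constant. All the work is in the reverse implication (2)$\Rightarrow$(1), where I must bound the measure of a ball of the larger radius $1/\sqrt L$ by the supremum of the measures of balls of the smaller radius $c/\sqrt L$.

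For this I would cover the large ball by a controlled number of small balls. Fix $L$ large enough that $2/\sqrt L<R_0(M)$ and fix $\xi\in M$. Choose a maximal $c/\sqrt L$-separated set $\{\xi_j\}_{j=1}^N$ inside $B(\xi,1/\sqrt L)$. By maximality the balls $B(\xi_j,c/\sqrt L)$ cover $B(\xi,1/\sqrt L)$, while the balls $B(\xi_j,c/(2\sqrt L))$ are pairwise disjoint and, since $c<1$, all contained in $B(\xi,2/\sqrt L)$. Subadditivity then gives
\[
\mu_L\bigl(B(\xi,1/\sqrt L)\bigr)\le\sum_{j=1}^N\mu_L\bigl(B(\xi_j,c/\sqrt L)\bigr)\le N\,\sup_{\eta\in M}\mu_L\bigl(B(\eta,c/\sqrt L)\bigr)\le\frac{NC}{k_L},
\]
so everything reduces to showing that $N$ is bounded by a constant depending only on $c$ and $M$, and not on $L$ or $\xi$.

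The key estimate, and the step I expect to be the crux, is the uniform bound on $N$. Here I would exploit compactness of $M$: for radii below the injectivity radius the exponential map is a diffeomorphism and $\sqrt{|g|}$ is bounded above and below by constants depending only on $M$, so geodesic balls have volume comparable to Euclidean balls of the same radius with uniform constants. Applying this to the disjoint balls $B(\xi_j,c/(2\sqrt L))\subseteq B(\xi,2/\sqrt L)$ yields
\[
N\Bigl(\tfrac{c}{2\sqrt L}\Bigr)^{m}\lesssim\vol\bigl(B(\xi,2/\sqrt L)\bigr)\lesssim\Bigl(\tfrac{2}{\sqrt L}\Bigr)^{m},
\]
whence $N\lesssim(4/c)^{m}$, a constant of the required type. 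This is exactly the local doubling property of the volume that makes the two radii interchangeable.

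It remains to treat the finitely many scales where $2/\sqrt L\ge R_0(M)$, i.e.\ $L$ in a bounded range $[1,L_0]$. There $c/\sqrt L\ge c/\sqrt{L_0}=:\rho_0>0$, so by compactness $M$ can be covered by a fixed finite number $N_0=N_0(\rho_0)$ of balls of radius $\rho_0$, hence of radius $c/\sqrt L\ge\rho_0$; since $B(\xi,1/\sqrt L)\subseteq M$ and $k_L\ge 1$, the same subadditivity estimate gives $\mu_L(B(\xi,1/\sqrt L))\le N_0C/k_L$. Taking the larger of the two constants yields (1) for all $L\ge 1$, completing the equivalence.
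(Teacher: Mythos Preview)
Your proof is correct and follows essentially the same route as the paper: both arguments reduce (2)$\Rightarrow$(1) to covering the large ball by boundedly many small balls and then bound that number by a disjointness-plus-volume comparison, obtaining $N\lesssim (C/c)^m$ independently of $L$. The only cosmetic differences are that the paper invokes the $5r$-covering lemma on all of $M$ (and then counts how many dilated balls meet $B(\xi,1/\sqrt L)$), whereas you pick a maximal $c/\sqrt L$-separated net directly inside the ball; and you explicitly dispose of the finitely many small $L$, which the paper tacitly absorbs into its standing assumption that $L$ is large.
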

\begin{proof}
Obviously, the first condition implies the second one since
\[
B(\xi, c/\sqrt{L})\subset B(\xi, 1/\sqrt{L}).
\]
Let's see the converse. The manifold $M$ is covered by the union of balls of
center $\xi\in M$ and radius $c/\sqrt{L}$. Taking into account the 5-$r$
covering lemma (see  \cite[Chapter 2, page 23]{Matt} for more details), we get a
family of disjoint balls, denoted by $B_{i}=B(\xi_{i},c/\sqrt{L})$, such that 
$M$ is covered by the union of $5B_{i}$. This union may be finite or countable. 
Let $\xi\in M$ and consider $B:=B(\xi,
1/\sqrt{L})$. Let $n$ be the number of balls $\bar{B_{i}}$ such that
$\bar{B}\cap \bar{5B_{i}}\neq \emptyset$. Since $\bar{B}$ is compact, we have a
finite number of these balls, so that
\[
\bar{B}\subset \cup^{n}_{i=1}\bar{5B_{i}}.
\]
We claim that $n$ is independent of $L$. Hence we will get our statement because
\[
\mu_{L}(B)\leq \sum^{n}_{i=1}\mu_{L}(B(\xi_{i}, 5c/\sqrt{L}))\lesssim
\frac{n}{k_{L}}.
\]
We only need to check that $n$ is independent of $L$. Using the triangle
inequality, for all $i=1,\ldots,n$
\[
B(\xi_{i}, c/\sqrt{L})\subset B(\xi, 10/\sqrt{L}).
\]
Therefore,
\[
\cup^{n}_{i=1}B(\xi_{i}, c/\sqrt{L})\subset B(\xi,10/\sqrt{L}),
\]
where the union is a disjoint union of balls. Now,
\[
\frac{10^m}{L^{m/2}}\approx \vol(B(\xi,10/\sqrt{L}))\geq
\sum^{n}_{i=1}\vol(B_{i})\approx n\frac{c^m}{L^{m/2}}.
\]
Hence, $n\lesssim (10/c)^m$ and we can choose it independently of $L$.
\end{proof}
Now we can prove the characterization of the Carleson measures.
\begin{proof}[Theorem \ref{charactseqCarlmeas}]
Assume condition \eqref{condCarlmeas}. We need to prove the existence of a
constant $C>0$ (independent of $L$) such that for each $f\in E_{L}$
\[
\int_{M}|f|^{2}d\mu_{L}\leq C\int_{M}|f|^{2}dV.
\]
Let $f\in E_{L}$ with $L$ and $r>0$ (small) fixed.
\begin{align*}
\int_{M}& |f(z)|^{2}d\mu_{L}
\overset{\eqref{estf2}}{\leq}C_{r}L^{(m+1)/2}\int_{M}\int_{B(z,r/\sqrt{L})\times
I_{r}}|h(w,s)|^{2}dV(w)dsd\mu_{L}(z)\\
& =C_{r}L^{(m+1)/2}\int_{M\times
I_{r}}|h(w,s)|^{2}\mu_{L}(B(w,r/\sqrt{L}))dV(w)ds\\
& \leq C_{r}L^{(m+1)/2}\frac{1}{k_{L}}\int_{M\times I_{r}}|h(w,s)|^{2}dV(w)ds
\overset{\text{Proposition \ref{normhandf}}}{\approx}\left\|f\right\|^{2}_{2}
\end{align*}
with constants independent of $L$. Therefore, $\mu=\left\{\mu_{L}\right\}_{L}$
is $L^{2}-$Carleson for $M$.\\
\\
For the converse, assume that $\mu$ is $L^2-$Carleson for $M$. We have to show
the existence of a constant $C$ such that for all $L\geq 1$ and
$\xi\in M$, 
$\mu_{L}(B(\xi,c/\sqrt{L}))\leq C/k_{L}$ (for some small constant $c>0$). We
will argue by contradiction, i.e. assume that for all $n\in \mathbb N$ there
exists $L_{n}$ and a ball $B_{n}$ of radius $c/\sqrt{L_{n}}$ such that
$\mu_{L_{n}}(B_{n})>n/k_{L_{n}}\approx n/L^{m/2}_{n}$ ($c$ will be chosen
later). Let $b_{n}$ be the center of the ball $B_{n}$. Define
$F_{n}(w)=K_{L_{n}}(b_{n},w)$. Note that the function $L^{-m/4}_{n}F_{n}\in
E_{L_n}$ and
$\left\|F_{n}\right\|^{2}_{2}=K_{L_{n}}(b_{n},b_{n})\approx L^{m/2}_{n}$.
Therefore,
\begin{align*}
C& \approx \int_{M}|L^{-m/4}_{n}F_{n}|^{2}dV\gtrsim
\int_{M}|L^{-m/4}_{n}F_{n}|^{2}d\mu_{L_{n}} \gtrsim
\int_{B_{n}}|L^{-m/4}_{n}F_{n}|^{2}d\mu_{L_{n}}\\
& \geq \inf_{w\in B_{n}}|L^{-m/4}_{n}F_{n}(w)|^{2}\mu_{L_{n}}(B_{n}) \gtrsim
\inf_{w\in B_{n}}|F_{n}(w)|^{2}\frac{n}{L^{m}_{n}}.
\end{align*}
Now we will study this infimum. Let $w\in B_{n}=B(b_{n},c/\sqrt{L_{n}})$. Then
\[
|F_{n}(b_{n})|-|F_{n}(w)|\leq |F_{n}(b_{n})-F_{n}(w)|\leq
\frac{c}{\sqrt{L_{n}}}\left\|\nabla F_{n}\right\|_{\infty}\leq
\]
\[
\overset{\text{Proposition
\ref{goodgradestimate}}}{\leq}\frac{c}{\sqrt{L_{n}}}C_{1}\sqrt{k_{L_{n}}}\sqrt{
L_{n}}\left\|F_{n}\right\|_{2}\approx cC_{1}k_{L_{n}}.
\]
We pick $c$ small enough so that
\[
\inf_{B_{n}}|F_{n}(w)|^{2}\geq CL^{m}_{n}.
\]
Finally, we have shown that $C \gtrsim n$ $\forall n$. This gives us the
contradiction.
\end{proof}
The following result is a Plancherel-P\'olya type theorem but in the context of
the Paley-Wiener spaces $E_{L}$.
\begin{thm}[Plancherel-P\'olya Theorem]\label{unionsepfam}
Let $\Z$ be a triangular family of points in $M$, i.e. 
$\Z=\left\{z_{Lj}\right\}_{j\in\left\{1,\ldots,m_{L}\right\},L\geq 1}\subset
M$. Then $\Z$ is a finite union of
uniformly separated families, if and only if there exists a constant $C>0$ such
that for all $L\geq 1$ and $f_{L}\in E_{L}$
\begin{equation}\label{PlancherelPolya}
\frac{1}{k_{L}}\sum^{m_{L}}_{j=1}|f_{L}(z_{Lj})|^{2}\leq
C\int_{M}|f_{L}(\xi)|^{2}dV(\xi).
\end{equation}
\end{thm}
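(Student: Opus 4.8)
The plan is to recognize inequality \eqref{PlancherelPolya} as the Carleson condition for a suitably chosen sequence of discrete measures, and then to translate the geometric characterization of Theorem \ref{charactseqCarlmeas} into the separation statement. Concretely, to a triangular family $\Z=\{z_{Lj}\}$ I would associate the sequence of measures
\[
\mu_L=\frac{1}{k_L}\sum_{j=1}^{m_L}\delta_{z_{Lj}},
\]
each of which is a finite positive measure on $M$. By definition $\int_M |f_L|^2\, d\mu_L=\frac{1}{k_L}\sum_{j=1}^{m_L}|f_L(z_{Lj})|^2$, so \eqref{PlancherelPolya} holds for all $L$ and all $f_L\in E_L$ precisely when $\mu=\{\mu_L\}$ is an $L^2$-Carleson sequence for $M$.

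By Theorem \ref{charactseqCarlmeas}, this is equivalent to the existence of a constant $C>0$ with $\sup_{\xi\in M}\mu_L(B(\xi,1/\sqrt L))\le C/k_L$ for all $L$. Since
\[
\mu_L(B(\xi,1/\sqrt L))=\frac{1}{k_L}\#\{j:\ z_{Lj}\in B(\xi,1/\sqrt L)\},
\]
the Carleson condition is exactly a uniform bound on the number of points of $\Z_L:=\{z_{Lj}\}_j$ in any geodesic ball of radius $1/\sqrt L$, namely $\sup_L\sup_{\xi\in M}\#\{j:\ z_{Lj}\in B(\xi,1/\sqrt L)\}\le C$. Thus the theorem reduces to the purely geometric equivalence between this uniform counting bound and the property of being a finite union of uniformly separated families (where uniformly separated means that distinct points at level $L$ are at distance $\gtrsim 1/\sqrt L$, with a constant independent of $L$).

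One direction of the geometric equivalence is a packing estimate. If $\Z$ is the union of $N$ families, each with distinct level-$L$ points at distance $\ge \varepsilon/\sqrt L$, then for a single such family the balls $B(z_{Lj},\varepsilon/(2\sqrt L))$ are pairwise disjoint and, whenever $z_{Lj}\in B(\xi,1/\sqrt L)$, are contained in $B(\xi,(1+\varepsilon/2)/\sqrt L)$. Comparing volumes—all of order $L^{-m/2}$ once $L$ is large enough—bounds the number of such points by a constant depending only on $\varepsilon$ and $m$, and summing over the $N$ families yields the uniform counting bound.

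For the converse, which I expect to be the main obstacle, I would use a graph-coloring scheme carried out uniformly in $L$. For fixed $L$ (where $\Z_L$ is finite), form the graph on $\Z_L$ joining $z_{Li}$ and $z_{Lj}$ when $\dist(z_{Li},z_{Lj})<1/\sqrt L$. The counting bound says $B(z_{Li},1/\sqrt L)$ contains at most $C$ points of $\Z_L$, so each vertex has degree at most $C-1$; a greedy coloring then partitions $\Z_L$ into at most $N:=C$ color classes, each of which is $(1/\sqrt L)$-separated. Collecting the $k$-th color class over all $L$ produces a triangular family $\Z^{(k)}$ that is uniformly separated (with constant $1$), and $\Z=\bigcup_{k=1}^N\Z^{(k)}$. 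The delicate points are that the degree bound, and hence the number of colors $N$, must be independent of $L$—which is exactly what the uniform Carleson constant supplies—and that the separation scale $1/\sqrt L$ be the same at every level, so that the resulting subfamilies are genuinely uniformly separated.
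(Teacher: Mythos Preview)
Your proposal is correct and follows exactly the paper's approach: associate the discrete measures $\mu_L=\frac{1}{k_L}\sum_j\delta_{z_{Lj}}$ and invoke Theorem~\ref{charactseqCarlmeas}. The paper's proof is in fact a single sentence to this effect; your additional paragraphs spelling out the equivalence between the uniform counting bound $\sup_{L,\xi}\#\{j:z_{Lj}\in B(\xi,1/\sqrt L)\}\le C$ and being a finite union of uniformly separated families (via volume packing in one direction and greedy graph coloring in the other) supply the standard combinatorial detail that the paper leaves implicit.
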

\begin{remark}
The above result is interesting because the inequality \eqref{PlancherelPolya}
means that the sequence of normalized reproducing kernels 
is a Bessel sequence for $E_{L}$, i.e.
\begin{align*}
\sum^{m_L}_{j=1}|<f,\tilde{K}_{L}(\cdot,z_{Lj})>|^2\lesssim
\left\|f\right\|^2_{2}\quad \forall f\in E_{L},
\end{align*}
where $\left\{\tilde{K}_{L}(\cdot,z_{Lj})\right\}_{j}$ are the 
normalized reproducing kernels. 
Note that $|\tilde{K}_{L}(\cdot,z_{Lj})|^2\approx
|K_{L}(\cdot,z_{Lj})|^2 k_{L}^{-1}$. That's the reason why the
quantity $k_{L}$ appears in the inequality \eqref{PlancherelPolya}.
\end{remark}
\begin{proof}
This is a consequence of Theorem \ref{charactseqCarlmeas} applied to the
measures
\[
\mu_{L}=\frac{1}{k_{L}}\sum^{m_{L}}_{j=1}\delta_{z_{Lj}}, \ L\geq 1.
\]
\end{proof}
\section{Characterization of Logvinenko-Sereda Sets}
Before we state the characterization, we would like to recall some history of these
kind of inequalities. The classical Logvinenko-Sereda (L-S) theorem describes
some equivalent norms for functions in the Paley-Wiener space $PW^p_{\Omega}$.
The precise statements is the following:
\begin{thm}[L-S]
Let $\Omega$ be a bounded set and $1\leq p<+\infty$. A set $E\subset \R^{d}$
satisfies
\[
\int_{\R^d}|f(x)|^pdx\leq C_{p}\int_{E}|f(x)|^pdx,\ \forall f\in
PW^p_{\Omega},
\]
if and only if there is a cube $K\subset \R^{d}$ such that
\[\inf_{x\in \R^{d}}|(K+x)\cap E|>0.
\]
\end{thm}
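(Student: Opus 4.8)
The plan is to prove the two implications separately, with the density condition being the easy (necessary) direction and the dominating-set inequality being the substantial (sufficient) one.

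For necessity I would argue by contradiction. Fix a Schwartz function $g$ whose spectrum is contained in $\Omega$, so that $g\in PW^p_\Omega$ with $\|g\|_{L^p(\R^d)}>0$; since $PW^p_\Omega$ is translation invariant, every $g(\cdot-x)$ lies in the space with the same norm. If the density condition failed, then for any fixed cube $K$ we could find points $x_n$ with $|(K+x_n)\cap E|\to 0$. Testing the hypothesized inequality against $f_n=g(\cdot-x_n)$ and changing variables gives $\int_E|f_n|^p=\int_{E-x_n}|g|^p$. Splitting this over $K$ and over $K^c$, the part over $K$ is small because $g$ is bounded and $|(E-x_n)\cap K|\to 0$, while the tail over $K^c$ is made small by the rapid decay of $g$ once $K$ is taken large. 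Thus $\int_E|f_n|^p\to 0$ while $\int_{\R^d}|f_n|^p$ stays bounded below, contradicting the inequality.

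For sufficiency, the hard part, I would reduce to a uniform local inequality on a grid and then sum. Cover $\R^d$ by a lattice of translates $\{Q_j\}$ of a fixed cube $Q$ whose side is comparable to that of $K$, and call $Q_j$ \emph{good} if $\int_{Q_j^\ast}|f|^p\le A\int_{Q_j}|f|^p$, where $Q_j^\ast$ is a fixed dilate of $Q_j$. A Chebyshev/averaging argument, using that the $Q_j^\ast$ have bounded overlap so that $\sum_j\int_{Q_j^\ast}|f|^p\lesssim\int_{\R^d}|f|^p$, shows that for $A$ large the bad cubes carry an arbitrarily small fraction of $\int_{\R^d}|f|^p$; it therefore suffices to dominate the mass on good cubes by the mass on $E$. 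On a good cube the band-limitedness, together with Bernstein's inequality, realizes $f$ as the restriction of an entire function of exponential type whose growth on a complex neighborhood of $Q_j$ is controlled by $A$ and by the product of the bandwidth with the side of $Q$.

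The crux is a local Remez/Tur\'an-type estimate: if $F$ is analytic on a neighborhood of $Q_j$ with such a growth bound, and $\omega\subset Q_j$ satisfies $|\omega|\ge\gamma|Q_j|$ (which holds with $\omega=E\cap Q_j$ by the density hypothesis at the scale dictated by $K$), then $\int_{Q_j}|F|^p\le C(\gamma,A)\int_\omega|F|^p$. I expect this to be the main obstacle. One proves it by restricting to lines parallel to the axes and applying the classical one-variable Remez inequality for functions of bounded frequency, controlling $|\{t:|F(t)|<\varepsilon\}|$ through the analytic growth, and then integrating in the transverse variables by Fubini to recover the full $\gamma$-density. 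Summing the resulting local inequalities over the good cubes and absorbing the negligible bad-cube contribution yields the global bound $\int_{\R^d}|f|^p\le C_p\int_E|f|^p$, completing the proof.
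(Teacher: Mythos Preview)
The paper does not prove this statement. The classical Logvinenko--Sereda theorem is quoted here purely as background and motivation; immediately after stating it the authors write ``One can find the original proof in \cite{LS} and another proof can be found in \cite[p.~112--116]{HJ}'' and move on. There is therefore no ``paper's own proof'' to compare your proposal against.

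That said, your outline is a reasonable sketch of one of the standard proofs (in the spirit of the Havin--J\"oricke presentation the paper cites). The necessity argument via translated Schwartz bumps is correct as written. For sufficiency, the good/bad cube decomposition combined with a local Remez--Tur\'an estimate is indeed the heart of the matter; the step you flag as the main obstacle---getting a uniform $L^p$ Remez inequality on a cube for entire functions of exponential type, with constant depending only on the relative density $\gamma$ and the growth bound---is exactly where the work lies, and your proposed line-by-line reduction via Fubini is the classical route. One point to be careful about: the one-dimensional slicing argument requires that a positive fraction of lines through the cube meet $E$ in a set of nontrivial one-dimensional measure, which follows from the density hypothesis and Fubini but should be made explicit.

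If you want a comparison with something the paper \emph{does} prove, note that the authors' argument for the manifold analogue (their Theorem~\ref{thmLS}) is structurally quite different from yours: for sufficiency they use a compactness/normal-families argument (harmonic extension, Ascoli--Arzel\`a, weak-$*$ limits of the restricted measures) together with a Hausdorff-dimension contradiction on the zero set of the limit function, rather than any Remez-type inequality. That approach avoids quantitative analytic-function estimates at the cost of being non-constructive in the constant.
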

One can find the original proof in \cite{LS} and another proof can be found in
\cite[p. 112-116]{HJ}.\\
\\
Luecking in \cite{Lueck} studied this notion for the Bergman spaces. Following
his ideas, in \cite{MarOrt}, it has been proved the following result.
\begin{thm}
Let $1\leq p<+\infty$, $\mu$ be a doubling measure and let $\mathcal
E=\left\{E_{L}\right\}_{L\geq 0}$ be a sequence of sets in $\mathbb S^{d}$. Then
$\mathcal E$ is $L^{p}(\mu)-$L-S if and only if $\mathcal E$ is $\mu-$relatively
dense.
\end{thm}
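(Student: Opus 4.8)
The plan is to follow Luecking's strategy for Bergman spaces, as adapted to the
spherical setting, and to organize the argument around a Whitney-type
decomposition of $\mathbb S^d$ at scale $1/\sqrt{L}$ together with the doubling
property of $\mu$. The relevant notion of $\mu$-relative density is that there
exist $r>0$ and $\gamma>0$ such that
\[
\inf_{L}\inf_{z\in\mathbb S^d}\frac{\mu(E_L\cap B(z,r/\sqrt{L}))}
{\mu(B(z,r/\sqrt{L}))}\geq\gamma.
\]
First I would prove the easy (necessity) direction: assuming the
$L^p(\mu)$-L-S inequality holds, I would test it against the normalized
reproducing-kernel-type functions concentrated near a point $z$ at scale
$1/\sqrt{L}$ (the spherical analogue of the $F_n$ used in the proof of
Theorem~\ref{charactseqCarlmeas}). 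Such a function is essentially of size one on
$B(z,r/\sqrt{L})$ and has controlled $L^p(\mu)$-mass on the complement, so if
$E_L$ had arbitrarily small $\mu$-proportion in some ball, the reverse
inequality would force a contradiction. This uses the doubling hypothesis on
$\mu$ to compare masses of concentric balls and the pointwise/Bernstein-type
control already available for the spaces $E_L$.

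For the main (sufficiency) direction I would show that $\mu$-relative density
implies the sampling inequality
\[
\int_{\mathbb S^d}|f|^p\,d\mu\leq C_p\int_{E_L}|f|^p\,d\mu,
\qquad\forall f\in E_L.
\]
The key local ingredient is a reverse Hölder / quantitative estimate: on a ball
$B=B(z,r/\sqrt{L})$, a function $f\in E_L$ oscillates little (by the Bernstein
inequality \eqref{Bernsteinineqeigenfunction} rescaled to this scale), so its
$L^p(\mu)$-mass over $B$ is comparable to its mass over the density subset
$E_L\cap B$ up to an error controlled by the oscillation. Concretely I would
write, for each ball in the cover,
\[
\int_{B}|f|^p\,d\mu\leq C\int_{E_L\cap B}|f|^p\,d\mu
+C\,\mu(B)\,\Bigl(\tfrac{r}{\sqrt{L}}\Bigr)^p\,\|\nabla f\|_{L^\infty(B)}^p,
\]
sum over a bounded-overlap cover of $\mathbb S^d$ by such balls, and absorb the
gradient term on the left. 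The doubling property of $\mu$ guarantees
$\mu(E_L\cap B)\gtrsim\mu(B)$ uniformly, which is exactly what converts a lower
bound on $|f|$ over $E_L\cap B$ into a lower bound over all of $B$; doubling is
also what lets me pass from the relative-density scale $r$ to any fixed working
scale without losing uniformity in $L$.

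The main obstacle will be controlling the gradient (oscillation) error term so
that it can genuinely be absorbed. If $r$ is chosen too large the oscillation of
$f$ across a ball is not small and the error term dominates; the remedy is to
exploit that relative density at scale $r$ implies relative density at every
smaller scale comparable to $r$ (again using doubling), so I may shrink the
working radius until the Bernstein factor $(r/\sqrt{L})\|\nabla f\|_\infty$ is a
small multiple of a local $L^p(\mu)$-average of $|f|$. Turning this heuristic
into a clean absorption requires a local Poincaré-type comparison between
$\|\nabla f\|_{L^\infty(B)}$ and the $L^p(\mu)$-average of $|f|$ over the
slightly enlarged ball, uniformly in $L$; establishing this comparison for the
doubling measure $\mu$ (rather than for $dV$) is the delicate point, and it is
where the doubling hypothesis is used most essentially. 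Once the absorption is
secured, summing over the bounded-overlap cover yields the desired
$L^p(\mu)$-L-S inequality with constants independent of $L$, completing the
proof.
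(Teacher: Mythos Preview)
This theorem is not proved in the paper; it is quoted from \cite{MarOrt} as background, and the paper then proves an analogous result on a general compact manifold with the volume measure. So the relevant comparison is between your outline and the argument the paper gives for its own L-S theorem, which follows the method of \cite{MarOrt} and \cite{Lueck}.

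Your necessity direction is fine and matches the paper's approach (test against well-localized kernel-type functions).

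Your sufficiency argument, however, has a genuine gap. The absorption scheme you propose hinges on two claims that do not hold. First, $\mu$-relative density at scale $r$ does \emph{not} imply $\mu$-relative density at strictly smaller scales, even with doubling: take $E_L$ to be a union of tiny caps, one in each ball of an $(r/\sqrt{L})$-net; this is relatively dense at scale $r$ but fails completely at scale $r/100$. Doubling compares $\mu$-masses of concentric balls, not the proportion of $E_L$ inside them, so you cannot shrink the working radius at will. Second, the ``local Poincar\'e-type comparison'' you invoke, namely $\|\nabla f\|_{L^\infty(B)}$ controlled by the $L^p(\mu)$-average of $|f|$ on a slightly larger ball, is simply false for functions in $E_L$: a function in $E_L$ may be uniformly small on a ball of radius $r/\sqrt{L}$ while its gradient there is as large as $\sqrt{L}\|f\|_\infty$. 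The available Bernstein-type estimates (global, or local in terms of the harmonic extension) do not yield what your absorption step needs.

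The method that actually works, both in \cite{MarOrt} and in the paper's manifold version, is Luecking's compactness argument rather than a direct absorption. One introduces the set $D$ of points where $|f|^2$ is at least $\epsilon$ times a local average of the harmonic extension; the mass of $f$ outside $D$ is automatically $O(\epsilon)\|f\|_2^2$. One then proves the local sampling estimate on $D$ by contradiction: rescale to a unit ball, use the Schoen--Yau gradient bound to get equicontinuity, extract via Ascoli--Arzel\`a a nonzero harmonic limit $H$ (Lemma~\ref{uniformlimit}), and show $H$ vanishes on the support of a nontrivial weak-$*$ limit measure $\tau$ coming from the relative-density hypothesis. Since $\tau(\overline{\mathbb B(a,s)})\lesssim s^m$, a Hausdorff-dimension comparison with the zero set of the real-analytic $H$ (via Federer) gives the contradiction. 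No absorption or scale-reduction is needed.
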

For the precise definitions and notations see \cite{MarOrt}. Using the same
ideas, we will prove the above theorem for the case of our arbitrary compact
manifold $M$ and the measure given by the volume element.\\
\\
In what follows, $\A=\left\{A_{L}\right\}_{L}$ will be a sequence of sets in
$M$.
\begin{defini}
We say that $\A$ is L-S if there exists a constant $C>0$ such that for any $L$
and $f_{L}\in E_{L}$
\[
\int_{M}|f_{L}|^{2}dV\leq C\int_{A_{L}}|f_{L}|^{2}dV.
\]
\end{defini}
\begin{defini}
The sequence of sets $\A\subset M$ is relatively dense if there exists $r>0$ and
$\rho>0$ such that for all $L$
\[
\inf_{z\in M}\frac{\vol(A_{L}\cap B(z,r/\sqrt{L}))}{\vol(B(z,r/\sqrt{L}))}\geq
\rho>0.
\]
\end{defini}
\begin{remark}
It is equivalent to have this property for all $L\geq L_{0}$ for some $L_{0}$
fixed.
\end{remark}
Our main statement is the following:
\begin{thm}
$\A$ is L-S if and only if $\A$ is relatively dense.
\end{thm}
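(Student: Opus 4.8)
The plan is to prove the two implications separately, and I expect the "relatively dense $\Rightarrow$ L-S" direction to be the substantive one. Let me think about each.

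First, the easier direction: **L-S $\Rightarrow$ relatively dense**. This should be a contradiction argument using the reproducing kernel, closely parallel to the converse direction in the proof of Theorem \ref{charactseqCarlmeas}. If $\A$ were not relatively dense, then for every $\rho>0$ and every $r$ there would be a sequence $L_n\to\infty$ (or a fixed $r$ with $\rho\to0$) and balls $B_n=B(b_n,r/\sqrt{L_n})$ where $\vol(A_{L_n}\cap B_n)/\vol(B_n)\to 0$. I would take the normalized reproducing kernel $F_n(w)=K_{L_n}(b_n,w)$ concentrated near $b_n$, which satisfies $\|F_n\|_2^2\approx L_n^{m/2}$ and $|F_n(b_n)|^2\approx L_n^{m}$, and whose mass is concentrated in $B_n$ thanks to the gradient estimate of Proposition \ref{goodgradestimate} (so $|F_n|\gtrsim L_n^{m/2}$ throughout $B_n$ for $r$ small). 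Then $\int_{A_{L_n}}|F_n|^2\,dV$ would be dominated by $\int_{B_n\setminus A_{L_n}}$-type contributions plus the small tail outside $B_n$, and I would estimate $\int_{A_{L_n}}|F_n|^2\,dV \lesssim L_n^m\,\vol(A_{L_n}\cap B_n) + (\text{tail})$, which is $o(\|F_n\|_2^2)$, contradicting the L-S inequality. The only care needed is controlling the kernel away from $B_n$; for that I would either use H\"ormander's off-diagonal bound $K_L(z,w)=O(L^{(m-1)/2})$ or split $M$ into the concentration ball and its complement.

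The harder direction is **relatively dense $\Rightarrow$ L-S**, and this is where the main work lies. Following the Bergman-space strategy of Luecking and the adaptation in \cite{MarOrt}, the idea is a \emph{covering plus Poincar\'e-type} argument at the scale $1/\sqrt{L}$. I would cover $M$ by a bounded-overlap family of balls $B_i=B(\xi_i, r/\sqrt{L})$ (bounded overlap with a constant independent of $L$, exactly as in the $5r$-covering argument used for the Carleson lemma). On each such ball, $f\in E_L$ oscillates little: by the gradient estimate (Proposition \ref{goodgradestimate}, or better a local version), $f$ is nearly constant at this scale, so $\int_{B_i}|f|^2\,dV \approx \vol(B_i)\,|f(\xi_i)|^2$ up to a controlled error. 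The relative density hypothesis then gives $\int_{A_L\cap B_i}|f|^2\,dV \gtrsim \rho\,\vol(B_i)\,|f(\xi_i)|^2 \gtrsim \rho\,\int_{B_i}|f|^2\,dV$, provided the oscillation error is a small enough fraction of the mean. Summing over $i$ using the bounded overlap recovers $\int_{A_L}|f|^2 \gtrsim \int_M |f|^2$.

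The main obstacle is that the naive "$f$ is almost constant" estimate is too crude: the Bernstein bound in Proposition \ref{goodgradestimate} carries an extra $\sqrt{k_L}\approx L^{m/4}$ factor (the conjectured sharp $L^\infty$ Bernstein inequality is not available), so $\|\nabla f\|_\infty$ controls pointwise oscillation only in terms of $\|f\|_2$ globally, not locally ball-by-ball. To close the argument I would instead run the oscillation estimate in an averaged/$L^2$ form: control $\int_{B_i}|f - \bar f_i|^2\,dV$ (where $\bar f_i$ is the mean of $f$ over $B_i$) by $\vol(B_i)\,(r/\sqrt L)^2 \,\int_{\tilde B_i}|\nabla f|^2$ via a Poincar\'e inequality on the ball, then sum and use the $L^2$-Bernstein inequality $\|\nabla f\|_2 \lesssim \sqrt{L}\,\|f\|_2$ stated in the text. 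This yields $\sum_i \int_{B_i}|f-\bar f_i|^2 \lesssim r^2 \|f\|_2^2$, so for $r$ small the total oscillation is a small multiple of $\|f\|_2^2$; choosing $r$ small (with $\rho$ adjusted) makes the density lower bound dominate the oscillation error on the sum, giving the desired inequality. The delicate point throughout is tracking that the Poincar\'e constant and overlap constant are uniform in $L$, which follows because the rescaled balls $B(\xi_i,r/\sqrt L)$ are geometrically comparable across $L$ once $r<R_0(M)$.
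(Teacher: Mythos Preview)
Both directions of your plan have genuine gaps.

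\medskip
\textbf{L-S $\Rightarrow$ relatively dense.} Testing with $F_n=K_{L_n}(b_n,\cdot)$ does not work, because the reproducing kernel does not concentrate near the diagonal well enough. H\"ormander's off-diagonal bound $K_L(z,w)=O(L^{(m-1)/2})$ gives only $\int_{M\setminus B_n}|F_n|^2\lesssim L_n^{m-1}$, which for $m\ge 2$ is \emph{larger} than $\|F_n\|_2^2\approx L_n^{m/2}$; and the crude estimate $\int_{B_n}|F_n|^2\gtrsim c^m L_n^{m/2}$ cannot be subtracted from $\|F_n\|_2^2$ to make the tail $o(L_n^{m/2})$, since both sides are comparable. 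Concretely, if $A_L=M\setminus B(b,r/\sqrt{L})$ then $\int_{A_L}|K_L(b,\cdot)|^2$ is still $\approx L^{m/2}$, so your test function does not detect the missing density. The paper fixes this by replacing $K_L$ with the Riesz kernel $S^N_L(z,\xi)=\sum_{\lambda_i\le L}(1-\lambda_i/L)^N\phi_i(z)\phi_i(\xi)$, which satisfies the pointwise decay $|S^N_L(z,\xi)|\lesssim L^{m/2}(1+\sqrt{L}\,d(z,\xi))^{-N-1}$ (H\"ormander, Sogge). Choosing $N>m/2-1$ makes the tail outside $B(\xi,R_0/\sqrt{L})$ smaller than any prescribed $\epsilon$, and the argument then closes directly.

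\medskip
\textbf{Relatively dense $\Rightarrow$ L-S.} Your covering/Poincar\'e scheme yields
\[
\int_{A_L}|f|^2 \;\gtrsim\; \Bigl(\tfrac{\rho}{4}-Cr^2\Bigr)\|f\|_2^2,
\]
and you propose to take $r$ small so that $Cr^2<\rho/4$. But the hypothesis hands you a \emph{fixed} pair $(r,\rho)$, and relative density at scale $r$ does \emph{not} imply relative density at smaller scales: if $A_L$ is a union of tiny balls centered on an $r/\sqrt{L}$--net, every $r/\sqrt{L}$--ball meets $A_L$ with definite density, yet most $r/(100\sqrt{L})$--balls miss $A_L$ entirely. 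So there is no legitimate way to ``adjust $\rho$'' as $r\to 0$, and the Poincar\'e error $Cr^2\|f\|_2^2$ may swamp the gain. The underlying reason is that an $L^2$ oscillation bound cannot see the real-analytic rigidity that actually drives the result.

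The paper's proof of this direction is entirely different: one restricts to the ``good'' set $D=\{\,|f_L(z)|^2\ge \epsilon\,\fint_{B(z,r/\sqrt L)\times I_r}|h_L|^2\,\}$ (on whose complement $\|f_L\|_2^2$ contributes only $O(\epsilon)\|f_L\|_2^2$) and then proves the pointwise bound
\[
|f_L(w)|^2\le \frac{C}{\vol(B(w,r/\sqrt L))}\int_{A_L\cap B(w,r/\sqrt L)}|f_L|^2,\qquad w\in D,
\]
by contradiction. Rescaling via $\exp_{w_n}(r\,\cdot/\sqrt{L_n})$ gives harmonic extensions $H_n$ on $\mathbb B(0,1)\times(-1,1)$ that are uniformly bounded and equicontinuous (Schoen--Yau gradient estimate), hence subconverge to a limit $H$ that is Euclidean-harmonic (Lemma~\ref{uniformlimit}). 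The relative density passes to a nontrivial weak-$*$ limit measure $\tau$ with $\tau(\overline{\mathbb B(a,s)})\lesssim s^m$, while $F=H(\cdot,0)$ vanishes on $\supp\tau$ but satisfies $|F(0)|>0$. Real analyticity of $F$ forces $\mathcal H^{m-1}(F^{-1}(0))<\infty$, so $\dim_{\mathcal H}(\supp\tau)\le m-1$, contradicting $\tau(\supp\tau)>0$ together with the density bound (Frostman). This analyticity/Hausdorff-dimension step is precisely what your Poincar\'e argument is missing.
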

We shall prove the two implications in the statement separately. First we will
show that this condition is necessary. Before proceeding, we construct functions
in $E_{L}$ with a desired decay of its $L^2$-integral outside a ball.

\begin{prop}\label{construcfunctions}
Given $\xi\in M$ and $\epsilon>0$, there exist functions $f_{L}=f_{L,\xi}\in
E_{L}$ and $R_{0}=R_{0}(\epsilon,M)>0$ such that
\begin{enumerate}
  \item $\left\|f_{L}\right\|_{2}=1$.
  \item 
\[
\int_{M\setminus B(\xi,R_0/\sqrt{L})}|f_{L}|^2dV<\epsilon \quad \forall L.
\]
  \item For all $L\geq 1$ and any subset $A\subset M$,
\[
\int_{A}|f_L|^2dV\leq C_{1}\frac{\vol(A\cap
B(\xi,R_{0}/\sqrt{L}))}{\vol(B(\xi,R_{0}/\sqrt{L}))}+\epsilon,
\]
where $C_{1}$ is a constant independent of $L$, $\xi$ and $f_L$.
\end{enumerate}
\end{prop}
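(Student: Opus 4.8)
The plan is to construct the functions $f_L$ explicitly from the reproducing kernel, since the kernel $K_L(\xi,\cdot)$ is precisely the function in $E_L$ that concentrates its $L^2$-mass near $\xi$. Specifically, I would set
\[
f_L(w)=\frac{K_L(\xi,w)}{\sqrt{K_L(\xi,\xi)}},
\]
which immediately gives property (1) since $\|K_L(\xi,\cdot)\|_2^2=K_L(\xi,\xi)$ by the reproducing property. The main work is then to establish the concentration estimate (2): that the bulk of the $L^2$-mass of $f_L$ lives in a ball of radius $R_0/\sqrt{L}$ around $\xi$. Here I would exploit H\"ormander's off-diagonal bound $K_L(z,w)=O(L^{(m-1)/2})$ together with the normalization $K_L(\xi,\xi)\approx L^{m/2}$.

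The key difficulty, and the step I expect to be the main obstacle, is that H\"ormander's pointwise bound $|K_L(\xi,w)|\lesssim L^{(m-1)/2}$ alone is \emph{not} decaying in $w$ --- it is a uniform bound valid for all $w\neq \xi$. Integrating this crude bound over $M\setminus B(\xi,R_0/\sqrt{L})$ gives a contribution of size $\approx L^{m-1}\cdot\vol(M)$, which after dividing by $K_L(\xi,\xi)\approx L^{m/2}$ yields something of order $L^{(m-2)/2}$, and this does \emph{not} tend to zero for $m\geq 2$. So the naive estimate fails, and one genuinely needs a decay estimate for the kernel away from the diagonal. The standard remedy is to use the sharper \emph{local} Weyl law / on-diagonal asymptotics combined with the decay of the kernel at the natural scale: away from the diagonal, on the scale $\dist(\xi,w)\gtrsim R/\sqrt{L}$, the kernel decays, and the relevant estimates (going back to H\"ormander and refined by Sogge and others for the spectral projector, or obtainable from the wave-kernel parametrix) show that the tail mass
\[
\int_{M\setminus B(\xi,R/\sqrt{L})}|K_L(\xi,w)|^2\,dV(w)
\]
can be made an arbitrarily small fraction of $K_L(\xi,\xi)$ by taking $R$ large. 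I would therefore invoke such a tail estimate to choose $R_0=R_0(\epsilon,M)$ so that this ratio is below $\epsilon$, which is exactly property (2).

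For property (3), I would argue that within the concentration ball the normalized kernel is essentially flat --- of size comparable to its peak value --- so that the integral over $A$ splits as the integral over $A\cap B(\xi,R_0/\sqrt{L})$ plus the negligible tail. Concretely, for $w\in B(\xi,R_0/\sqrt{L})$ the gradient bound from Proposition \ref{goodgradestimate} controls the oscillation of $K_L(\xi,\cdot)$ across the ball: the variation over a ball of radius $R_0/\sqrt{L}$ is of order $(R_0/\sqrt{L})\cdot\|\nabla K_L(\xi,\cdot)\|_\infty$, which after normalization is comparable to the peak, so $|f_L(w)|^2\lesssim 1/\vol(B(\xi,R_0/\sqrt{L}))$ uniformly on the ball. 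Hence
\[
\int_A|f_L|^2\,dV\leq \int_{A\cap B(\xi,R_0/\sqrt{L})}|f_L|^2\,dV+\int_{M\setminus B(\xi,R_0/\sqrt{L})}|f_L|^2\,dV\leq C_1\frac{\vol(A\cap B(\xi,R_0/\sqrt{L}))}{\vol(B(\xi,R_0/\sqrt{L}))}+\epsilon,
\]
where the first term uses the uniform upper bound on $|f_L|^2$ inside the ball together with $\vol(B(\xi,R_0/\sqrt{L}))\approx R_0^m L^{-m/2}$, and the second is controlled by (2). The constant $C_1$ comes out of the flatness estimate and is manifestly independent of $L$, $\xi$, and $A$, as required.
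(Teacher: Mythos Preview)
Your overall architecture is right: build $f_L$ from a kernel peaked at $\xi$, normalize, and split any integral into the concentration ball plus a tail controlled by (2). The gap is precisely where you flag it. With the raw reproducing kernel $K_L(\xi,\cdot)$, the only off-diagonal information available in the paper is H\"ormander's uniform bound $K_L(z,w)=O(L^{(m-1)/2})$, and as you compute, that is not enough. You then say you would ``invoke'' a tail estimate coming from sharper spectral-projector bounds, but you never state one, and in fact the $O(L^{(m-1)/2})$ remainder in H\"ormander's asymptotic for $K_L$ genuinely does not decay in $d(z,w)$; squaring it and integrating over $M\setminus B(\xi,R/\sqrt L)$ contributes $\approx L^{m-1}/K_L(\xi,\xi)\approx L^{(m-2)/2}$, which is not small for $m\ge 2$. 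So this step is not a missing citation---with the chosen $f_L$ it fails.

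The paper resolves the issue by replacing $K_L$ with the Riesz kernel of order $N$,
\[
S^N_L(z,\xi)=\sum_{\lambda_i\le L}\Bigl(1-\tfrac{\lambda_i}{L}\Bigr)^{N}\phi_i(z)\phi_i(\xi),
\]
and setting $f_L=S^N_L(\cdot,\xi)/\|S^N_L(\cdot,\xi)\|_2$. One still has $\|S^N_L(\cdot,\xi)\|_2^2\approx S^N_L(\xi,\xi)\approx L^{m/2}$ (compare with $K_{L/2}$ from below), so (1) is immediate. The payoff of the Riesz smoothing is the pointwise decay
\[
|S^N_L(z,\xi)|\le C\,L^{m/2}\bigl(1+\sqrt{L}\,d(z,\xi)\bigr)^{-N-1},
\]
due to H\"ormander and Sogge. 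Choosing $N$ with $2(N+1)>m$, a direct computation with the distribution function gives
\[
\int_{M\setminus B(\xi,R/\sqrt L)}|f_L|^2\,dV\ \lesssim\ R^{-(2(N+1)-m)},
\]
so (2) holds once $R_0$ is large. Property (3) then follows from the same pointwise bound: on $B(\xi,R_0/\sqrt L)$ one simply has $|f_L|^2\lesssim L^{m/2}$, hence $\int_{A\cap B}|f_L|^2\lesssim L^{m/2}\vol(A\cap B)\approx R_0^m\,\vol(A\cap B)/\vol(B)$. Your detour through the gradient estimate for (3) would also work, but is unnecessary once the kernel bound is available.
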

\textbf{Remark.} In the above Proposition, the $R_{0}$ does not depend on the
point $\xi$.
\begin{proof}
Given $z,\xi\in M$ and $L\geq 1$, let $S^{N}_{L}(z,\xi)$ denote the Riesz kernel 
of index $N\in\mathbb N$ associated to the Laplacian, i.e
\[
S^{N}_{L}(z,\xi)=\sum^{k_{L}}_{i=1}\left(1-\frac{\lambda_{i}}{L}\right)^{N}
\phi_{i}(z)\phi_{i}(\xi).
\]
Note that $S^{0}_L(z,\xi)=K_L(z,\xi)$. The Riesz kernel satisfies the following inequality.
\begin{equation}\label{RieszKernelEstimate}
|S^{N}_{L}(z,\xi)|\leq CL^{m/2}(1+\sqrt{L}d(z,\xi))^{-N-1}.
\end{equation}
This estimate has its origins in H\"ormander's article \cite[Theorem
5.3]{HormRiesz}. Estimate \eqref{RieszKernelEstimate} can be found also in
\cite[Lemma 2.1]{Sogge}.\\
Note that on the diagonal, $S^{N}_{L}(z,z)\approx C_{N}L^{m/2}$. The upper
bound is trivial by the definition and the lower bound follows from
\[
 S^N_{L}(z,z)\ge \sum^{k_{L/2}}_{i=1}\left(1-\frac{\lambda_{i}}{L}\right)^{N}
\phi_{i}(z)\phi_{i}(z)\ge 2^{-N}K_{L/2}(z,z)\approx C_{N}L^{m/2}.
\]
Similarly we observe that $\left\|S^{N}_{L}(\cdot,\xi)\right\|^{2}_{2}\approx
C_{N}L^{m/2}$.\\
\\
Given $\xi\in M$, define for all $L\geq 1$
\[
f_{L,\xi}(z):=f_{L}(z)=\frac{S^{N}_{L}(z,\xi)}{\left\|S^{N}_{L}
(\cdot,\xi)\right\|_{2}}.
\]
We will choose the order $N$ later. Each $f_L$ belongs to the space $E_L$ and 
has unit $L^{2}$-norm. Let us verify the second property claimed in 
Proposition \ref{construcfunctions}. Fix a radius $R$. 
Using the estimate \eqref{RieszKernelEstimate},
\[
\int_{M\setminus B(\xi,R/\sqrt{L})}|f_L|^2dV\leq C_{N}L^{m/2} \int_{M\setminus
B(\xi,R/\sqrt{L})}
\frac{dV}{(\sqrt{L}d(z,\xi))^{2(N+1)}}=(\star)
\]
For any $t\geq 0$, consider the following set.
\[
A_{t}:=\left\{z\in M:\quad d(z,\xi)\geq \frac{R}{\sqrt{L}},\quad d(z,\xi)<
\frac{t^{-1/(2(N+1))}}{\sqrt{L}}\right\}.
\]
Note that for $t>R^{-2(N+1)}$, $A_{t}=\emptyset$ and for $t<R^{-2(N+1)}$, 
$A_{t}\subset B(\xi, t^{-1/(2(N+1))}/\sqrt{L})$. Using the distribution function, we have:
\[
(\star)=C_{N}L^{m/2}\int^{R^{-2(N+1)}}_{0}\vol(A_{t})dt \leq
C_{N}\frac{1}{R^{2(N+1)-m}},
\]
provided $N+1>m/2$. Thus if we pick $R_0$ big enough we get
\begin{equation}\label{L2decay}
\int_{M\setminus B(\xi,R_0/\sqrt{L})}|f_L|^2dV<\epsilon.
\end{equation}
Now the third property claimed in Proposition 
\ref{construcfunctions} follows from \eqref{L2decay}. 
Indeed, given any subset $A$ in the manifold $M$,
\[
\int_{A}|f_{L}|^2dV \leq \int_{A\cap B(\xi,R_{0}/\sqrt{L})}|f_L|^2dV+\epsilon.
\]
Observe that
\begin{align*}
\int_{A\cap B(\xi,R_{0}/\sqrt{L})}|f_L|^2dV & \lesssim C_{N}L^{m/2}
\int_{A\cap B(\xi,R_{0}/\sqrt{L})} \frac{dV(z)}{(1+\sqrt{L}d(z,\xi))^{2(N+1)}}\\
& \lesssim C_{N}R^{m}_{0}\frac{\vol(A\cap B(\xi,R_{0}/\sqrt{L}))}
{\vol(B(\xi,R_{0}/\sqrt{L}))}.
\end{align*}
\end{proof}

Now we are ready to prove one of the implications in the characterization of the
L-S sets.
\begin{prop}
Assume $\A$ is L-S. Then it is relatively dense.
\end{prop}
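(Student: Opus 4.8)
The plan is to argue directly using the concentrated test functions $f_{L,\xi}$ built in Proposition \ref{construcfunctions}, letting the base point $\xi$ range over all of $M$. The decisive feature of these functions is that they have unit $L^2(M)$-norm while their mass is essentially confined to the small geodesic ball $B(\xi,R_0/\sqrt{L})$, and property (3) bounds the portion of their mass falling in an arbitrary set $A$ by the relative volume of $A$ inside that ball, up to an additive $\epsilon$.

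First I would fix a small $\epsilon>0$ (to be pinned down at the end) and invoke Proposition \ref{construcfunctions} to obtain, for each $\xi\in M$ and each $L\geq 1$, a function $f_{L,\xi}\in E_L$ with $\|f_{L,\xi}\|_2=1$ together with a radius $R_0=R_0(\epsilon,M)$ that is crucially \emph{independent of $\xi$ and $L$}. Applying the L-S hypothesis to $f_{L,\xi}$ gives
\[
1=\int_M|f_{L,\xi}|^2\,dV\leq C\int_{A_L}|f_{L,\xi}|^2\,dV,
\]
and then feeding $A=A_L$ into property (3) of the Proposition estimates the right-hand side by
\[
\int_{A_L}|f_{L,\xi}|^2\,dV\leq C_1\frac{\vol(A_L\cap B(\xi,R_0/\sqrt{L}))}{\vol(B(\xi,R_0/\sqrt{L}))}+\epsilon.
\]

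Combining these two displays yields
\[
1\leq C C_1\frac{\vol(A_L\cap B(\xi,R_0/\sqrt{L}))}{\vol(B(\xi,R_0/\sqrt{L}))}+C\epsilon.
\]
The final step is simply to choose $\epsilon=1/(2C)$ so that the additive term $C\epsilon=1/2$ can be absorbed into the left-hand side, leaving
\[
\frac{\vol(A_L\cap B(\xi,R_0/\sqrt{L}))}{\vol(B(\xi,R_0/\sqrt{L}))}\geq\frac{1}{2CC_1}.
\]
Since $C$, $C_1$ and $R_0$ do not depend on $\xi$ or $L$, taking the infimum over $\xi\in M$ and over $L$ delivers relative density with $r=R_0$ and $\rho=1/(2CC_1)>0$.

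The heavy lifting has already been carried out in constructing the test functions, so there is essentially no obstacle remaining at this stage; the one point that must be respected is that $R_0$ in Proposition \ref{construcfunctions} is uniform in $\xi$ (as the remark following that proposition stresses). This uniformity is precisely what lets the resulting lower bound $\rho$ be independent of the base point, so that the estimate upgrades from a pointwise statement to genuine relative density.
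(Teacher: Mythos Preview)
Your proof is correct and follows essentially the same approach as the paper's own argument: apply the L-S inequality to the concentrated test functions $f_{L,\xi}$ from Proposition~\ref{construcfunctions}, use property~(3) to bound the integral over $A_L$, and then choose $\epsilon$ small enough to absorb the additive term. The only cosmetic difference is that you make the explicit choice $\epsilon=1/(2C)$ where the paper simply says ``provided $\epsilon>0$ is small enough''.
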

\begin{proof}
Assume $\A$ is L-S, i.e.
\[
\int_{M}|f_{L}|^{2}dV\leq C\int_{A_{L}}|f_{L}|^{2}dV.
\]
Let $\xi\in M$ be an arbitrary point. Fix $\epsilon>0$ and
consider the $R_{0}$ and the functions $f_{L}\in E_{L}$ given by Proposition
\ref{construcfunctions}. Using the third property of Proposition
\ref{construcfunctions} for the sets $A_L$, we get for all $L\geq 1$,
\begin{align*}
1& = \left\|f_{L}\right\|^2_{2}\leq C\int_{A_{L}}|f_{L}|^{2}\leq
CC_{1}\frac{\vol(A_{L}\cap
B(\xi,R_{0}/\sqrt{L}))}{\vol(B(\xi,R_{0}/\sqrt{L}))}+C\epsilon,
\end{align*}
where $C_{1}$ is a constant independent of $L$, $\xi$ and $f_L$.
Therefore, we have proved that there exist constants $c_{1}$ and $c_{2}$ such
that
\[
\frac{\vol(A_{L}\cap B(\xi,R_{0}/\sqrt{L}))}{\vol(B(\xi,R_{0}/\sqrt{L}))}\geq
c_{1}-c_{2}\epsilon.
\]
Hence, $\A$ is relatively dense provided $\epsilon>0$ is small enough.
\end{proof}
Before we continue, we will prove a result concerning the uniform limit of
harmonic functions with respect to some metric.
\begin{lemma}\label{uniformlimit}
Let $\left\{H_n\right\}_{n}$ be a family of uniformly bounded real functions
defined on the ball $\mathbb B(0,\rho)\subset \R^{d}$ for some $\rho>0$. Let $g$
be a non-singular $\mathcal C^{\infty}$ metric such that $g$ and all its
derivatives are uniformly bounded and $g_{ij}(0)=\delta_{ij}$. Define
$g_n(z)=g(z/L_n)$ (the rescaled metrics) where $L_n$ is a sequence of values
tending to $\infty$ as $n$ increases. Assume that the family
$\left\{H_n\right\}_n$ converges uniformly on compact subsets of $\mathbb
B(0,\rho)$ to a limit function $H:\mathbb B(0,\rho)\to\R$ and $H_n$ is harmonic
with respect to the metric $g_n$ (i.e. $\Delta_{g_n}H_n=0$). Then, the limit
function $H$ is harmonic in the Euclidean sense.
\end{lemma}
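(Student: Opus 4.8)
The plan is to use a very weak (doubly integrated-by-parts) formulation of the equation, so that no derivatives of $H_n$ ever appear. This is essential because the hypotheses give only uniform convergence of $\{H_n\}$ with no control whatsoever on its derivatives, and overcoming precisely this lack of derivative control is the crux of the argument.

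First I would note that since $g$ is $C^\infty$ and non-singular, each rescaled metric $g_n$ is smooth and uniformly elliptic on compact subsets, so by interior elliptic regularity every $\Delta_{g_n}$-harmonic $H_n$ is smooth; in particular $\sum_{i,j}\partial_i(\sqrt{|g_n|}\,g_n^{ij}\partial_j H_n)\equiv 0$ classically. Fixing a test function $\varphi\in C_c^\infty(\mathbb B(0,\rho))$, multiplying by $\varphi$ and integrating by parts twice to transfer both derivatives onto $\varphi$ yields
\[
\int_{\mathbb B(0,\rho)} H_n \sum_{i,j}\partial_j\bigl(\sqrt{|g_n|}\,g_n^{ij}\,\partial_i\varphi\bigr)\,dz=0,
\]
an identity in which $H_n$ enters without derivatives.

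Next I would analyze the coefficients. Writing $a_n^{ij}(z)=\sqrt{|g_n|(z)}\,g_n^{ij}(z)$, the rescaling $g_n(z)=g(z/L_n)$ gives $\partial_k a_n^{ij}(z)=\tfrac{1}{L_n}\bigl(\partial_k(\sqrt{|g|}\,g^{ij})\bigr)(z/L_n)$, which tends to $0$ uniformly on compact sets since the derivatives of $g$ are bounded and $L_n\to\infty$. Likewise $z/L_n\to 0$ uniformly on compacts, so by continuity and $g_{ij}(0)=\delta_{ij}$ we get $a_n^{ij}\to\delta^{ij}$ uniformly on compacts. Expanding
\[
\sum_{i,j}\partial_j\bigl(a_n^{ij}\,\partial_i\varphi\bigr)=\sum_{i,j}(\partial_j a_n^{ij})\,\partial_i\varphi+\sum_{i,j}a_n^{ij}\,\partial_i\partial_j\varphi,
\]
the first sum tends to $0$ and the second to $\sum_i\partial_i^2\varphi=\Delta\varphi$, both uniformly on the fixed compact set $\supp\varphi$.

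Finally, combining the uniform convergence $H_n\to H$ on $\supp\varphi$ with the uniform convergence of the coefficient expression to $\Delta\varphi$, I would pass to the limit in the first displayed identity to obtain
\[
\int_{\mathbb B(0,\rho)} H\,\Delta\varphi\,dz=0\qquad\text{for every }\varphi\in C_c^\infty(\mathbb B(0,\rho)).
\]
Thus $H$ is weakly harmonic, and Weyl's lemma upgrades this to $H$ being smooth and harmonic in the Euclidean sense. The only delicate point is the one flagged at the outset: we may not differentiate $H_n$, which is exactly why both integrations by parts must land on $\varphi$; once the coefficients are seen to converge with their first derivatives vanishing (thanks to the $1/L_n$ factor produced by the rescaling), the passage to the limit is routine.
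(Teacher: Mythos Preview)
Your proposal is correct and follows essentially the same route as the paper: both transfer the Laplace--Beltrami operator onto a test function $\varphi$ (the paper via self-adjointness of $\Delta_{g_n}$ with respect to $dV_{g_n}$, you via two explicit integrations by parts, which amounts to the same identity), observe that $\Delta_{g_n}\varphi\to\Delta\varphi$ uniformly, pass to the limit, and invoke Weyl's lemma. Your write-up is more explicit about why the coefficient derivatives vanish (the $1/L_n$ factor from rescaling), but the underlying argument is identical.
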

\begin{proof}
Let $\varphi\in\mathcal C^{\infty}_{c}(\mathbb B(0,\rho))$. 
We have
\[
\int_{\mathbb B(0,\rho)}\Delta_{g}f\varphi dV=\int_{\mathbb
B(0,\rho)}f\Delta_{g}\varphi dV.
\]
It is a direct computation to see that $\Delta_{g_{n}}\varphi\to
\Delta
\varphi$ uniformly and $\Delta_{g_{n}}\varphi$ is uniformly bounded
on $\mathbb B(0,\rho)$. Then
\[
0=\int_{\mathbb B(0,\rho)}H_{n}\Delta_{g_n}\varphi dV_{g_{n}}\to \int_{\mathbb
B(0,\rho)}H\Delta \varphi dm(z)=\int_{\mathbb B(0,\rho)}\Delta H \varphi
dm(z).
\]
Therefore, the limit function $H$ is harmonic in the weak sense. Applying Weyl's
lemma, $H$ is harmonic in the Euclidean sense.\\
\end{proof}
\begin{remark}
The above argument also holds if we have a sequence of metrics $g_n$ converging
uniformly to $g$ whose derivatives also converge uniformly to the derivatives of
$g$. In this case, the conclusion would be that the limit is harmonic with
respect to the limit metric $g$.
\end{remark}
Now, we shall prove the sufficient condition of the main result.
\begin{prop}
If $\A$ is relatively dense then it is L-S.
\end{prop}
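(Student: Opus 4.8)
The plan is to deduce the global L-S inequality from a uniform \emph{local} reverse inequality together with a covering argument, and to prove the local inequality by a rescaling/normal--families argument based on the harmonic extension. First I would reduce the statement to the following claim: there exist a radius $R>r$ and a constant $C$, depending only on $M$ and on the density constants $r,\rho$, such that for every $L\ge 1$, every $z\in M$ and every $f\in E_L$,
\[
\int_{B(z,r/\sqrt L)}|f|^2\,dV\ \le\ C\int_{A_L\cap B(z,R/\sqrt L)}|f|^2\,dV .
\]
Granting this local inequality, I would cover $M$ by balls $B(z_j,r/\sqrt L)$ whose dilates $B(z_j,R/\sqrt L)$ have overlap bounded by a constant $C_R$ independent of $L$ (the same packing estimate used in the Carleson proof, since $\vol(B(z,R/\sqrt L))\approx R^{m}L^{-m/2}$). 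Summing the local inequality over the cover yields $\int_M|f|^2\le C\sum_j\int_{A_L\cap B(z_j,R/\sqrt L)}|f|^2\le CC_R\int_{A_L}|f|^2$, which is exactly the L-S inequality.

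To prove the local inequality I would argue by contradiction. Fix $R$ large. If the local inequality fails for this $R$, then for each $n$ there are $L_n$, $z_n\in M$ and $f_n\in E_{L_n}$ with $\int_{B(z_n,r/\sqrt{L_n})}|f_n|^2>n\int_{A_{L_n}\cap B(z_n,R/\sqrt{L_n})}|f_n|^2$ (the case of bounded $L_n$ is finite--dimensional and treated separately, so I assume $L_n\to\infty$). Let $c_n^2=\max_{y\in\overline{B(z_n,R/\sqrt{L_n})}}\int_{B(y,r/\sqrt{L_n})}|f_n|^2$, attained at some $\xi_n$; then $c_n^2\ge\int_{B(z_n,r/\sqrt{L_n})}|f_n|^2$, so the full $A_{L_n}$--integral over $B(z_n,R/\sqrt{L_n})$ is less than $c_n^2/n$. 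Passing to normal coordinates at $\xi_n$ and rescaling by $\sqrt{L_n}$, I set $\hat f_n(x)=c_n^{-1}L_n^{-m/4}f_n(\exp_{\xi_n}(x/\sqrt{L_n}))$ on a fixed Euclidean ball $\mathbb B(0,\rho)$; after passing to a subsequence with $\xi_n\to\xi_\infty$, the rescaled metrics $g(\cdot/\sqrt{L_n})$ converge, with all derivatives, to the Euclidean metric. By maximality of $c_n$, every unit--scale average of $|\hat f_n|^2$ is $\lesssim 1$, so $\int_{\mathbb B(0,\rho)}|\hat f_n|^2\lesssim\rho^m$. The essential step is to upgrade this to a uniform pointwise bound and equicontinuity: rescaling the harmonic extensions $h_n$ on $M\times\R$ as well, the submean value property for $|h_n|^2$ together with the Schoen--Yau gradient estimate (Theorem \ref{YauGradestimate}) show that the rescaled extensions form a normal family on compact subsets. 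Here the localization of the Riesz kernel, estimate \eqref{RieszKernelEstimate}, enters through a local Nikolskii--type inequality $\sup_{B(y,r/\sqrt L)}|f|^2\lesssim L^{m/2}\int_{B(y,2r/\sqrt L)}|f|^2$, which controls $\|\hat f_n\|_\infty$ by its local $L^2$--norm and prevents the $L^2$--mass from escaping. I expect this uniform local control, together with the choice to normalize by the \emph{maximal local} mass (which simultaneously keeps $\hat f_n$ nondegenerate near the origin and forces the rescaled $A_{L_n}$--integral to tend to $0$), to be the main obstacle.

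To finish, I would extract a subsequential uniform limit $\hat f_n\to g$ on compacts; the boundary values of the rescaled extensions converge to the $s=0$ slice of a function $H$ which, by Lemma \ref{uniformlimit}, is harmonic for the limiting Euclidean metric. Thus $g=H(\cdot,0)$ is real--analytic, and by the normalization at $\xi_n$ one has $\int_{\mathbb B(0,r)}|g|^2\approx 1$, so $g\not\equiv 0$. On the other hand, the bound $c_n^{-2}\int_{A_{L_n}\cap B(\xi_n,R/\sqrt{L_n})}|f_n|^2<1/n$ rescales to $\int_{\tilde A_n}|\hat f_n|^2\to 0$, where $\tilde A_n$ is the rescaled set; since $\A$ is relatively dense, $\tilde A_n$ has Lebesgue density $\ge\rho'>0$ in every ball of radius $r$. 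A standard argument combining the uniform convergence with this density lower bound shows that the zero set $\{g=0\}$ has positive measure in $\mathbb B(0,r)$ (in fact it is itself relatively dense). But a nonzero real--analytic function on a connected domain has a zero set of measure zero, a contradiction; this establishes the local inequality and hence the theorem. Equivalently, one may invoke the Euclidean Logvinenko--Sereda theorem quoted above, since the limit $g$ is band--limited and vanishes on a relatively dense set, forcing $g\equiv 0$.
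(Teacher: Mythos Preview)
Your outline follows the same overall strategy as the paper---rescaling to normal coordinates, harmonic extension, normal--family compactness via subharmonicity and Schoen--Yau, and a real--analyticity contradiction for the limit---but it has a genuine gap precisely at the step you flag as ``the main obstacle.'' The local Nikolskii inequality you assert,
\[
\sup_{B(y,r/\sqrt L)}|f|^2\ \lesssim\ L^{m/2}\int_{B(y,2r/\sqrt L)}|f|^2\qquad (f\in E_L),
\]
does \emph{not} follow from the Riesz--kernel estimate \eqref{RieszKernelEstimate}. The kernel $S_{2L}^N$ is not a reproducing kernel for $E_L$: writing $f(z)=\int K_L(z,w)f(w)\,dV(w)$ requires $K_L$, which has no useful off--diagonal decay, while representing $Tf(z)=\int S_{2L}^N(z,w)f(w)\,dV(w)$ with the decaying kernel only recovers a Riesz mean $Tf$, and the ``far'' part of that integral is controlled by the \emph{global} norm of $f$, not the local one. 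Without this inequality your normalization by the maximal local $L^2$ mass $c_n^2=\max_y\int_{B(y,r/\sqrt{L_n})}|f_n|^2$ gives no uniform sup bound on the rescaled extensions $H_n$: subharmonicity bounds $|H_n(x,s)|^2$ by a local average of $|H_n|^2$ over a cylinder, but for $s\neq 0$ the slice $h_n(\cdot,s)\in E_{L_n}$ has different Fourier coefficients from $f_n$ and its local $L^2$ mass need not be $\lesssim c_n^2$. Conversely, if you normalize instead by the maximal local $L^2$ mass of $h_n$ on cylinders, subharmonicity does give the sup bound, but then you lose the nondegeneracy of $\hat f_n=H_n(\cdot,0)$, since the mass of $H_n$ could concentrate at $s\neq 0$.

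The paper resolves exactly this tension by a device you are missing. It first restricts attention to the ``good'' set
\[
D=\Bigl\{z\in M:\ |f_L(z)|^2\ge \epsilon\fint_{B(z,r/\sqrt L)\times I_r}|h_L|^2\Bigr\}
\]
and shows $\int_{M\setminus D}|f_L|^2\lesssim\epsilon\|f_L\|_2^2$, so it suffices to prove a \emph{pointwise} inequality $|f_L(w)|^2\le C\fint_{A_L\cap B(w,r/\sqrt L)}|f_L|^2$ for $w\in D$. At a point $w_n\in D_n$ the paper normalizes so that the local $L^2$ norm of the \emph{extension} on the cylinder equals $1$; subharmonicity then gives $|H_n|\lesssim 1$, and membership in $D_n$ gives $|H_n(0,0)|^2\gtrsim\epsilon$ simultaneously. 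That is the key idea that makes the Ascoli--Arzel\`a step go through, and it replaces both your covering reduction and your Nikolskii claim.

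Two smaller remarks. Your endgame---showing directly that the limit $g$ vanishes on a set of positive measure---is correct and in fact a bit more elementary than the paper's route via weak--$*$ limits of $\chi_{\tilde A_n}\,d\mu_n$, Federer's bound $\mathcal H^{m-1}(g^{-1}(0))<\infty$, and Frostman's lemma: from uniform convergence and relative density one gets $|\{|g|\le\epsilon\}|\ge\rho'>0$ for every $\epsilon>0$, hence $|\{g=0\}|>0$, contradicting real analyticity. On the other hand, your alternative suggestion of invoking the Euclidean Logvinenko--Sereda theorem is not justified: the limit $g=H(\cdot,0)$ is the boundary trace of an $\R^{m+1}$--harmonic function and hence real--analytic, but there is no reason it should be band--limited.
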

\begin{proof}
Fix $\epsilon>0$ and $r>0$. Let $D:=D_{\epsilon,r,f_{L}}$ be
\[
D=\bigl\{z\in M:\ |f_{L}(z)|^2=|h_{L}(z,0)|^2\geq
\epsilon\fint_{B(z,\frac r{\sqrt{L}})\times
I_{r}}\!\!\!|h_{L}(\xi,t)|^2dV(\xi)dt\bigr\}.
\]
The norm of $f_{L}$ is almost concentrated on $D$ because
\begin{align*}
\int_{M\setminus D}& |f_{L}(z)|^2dV(z)\lesssim\\
&\lesssim\epsilon\frac{1}{l(I_{r})}\int_{M\times
I_{r}}|h_{L}(\xi,t)|^2\frac{L^{m/2}}{r^{m}}\int_{(M\setminus D)\cap
B(\xi,r/\sqrt{L})}dV(z)dV(\xi)dt\\
& \lesssim \epsilon\frac{1}{l(I_{r})}\int_{M\times
I_{r}}|h_{L}(\xi,t)|^2dV(\xi)dt\overset{Proposition
\ref{normhandf}}{\lesssim}e^{2r}\epsilon\int_{M}|f_{L}|^2dV.
\end{align*}
It is enough to prove
\begin{equation}\label{aimLSestimateonA}
\int_{D}|f_{L}|^2dV\lesssim \int_{A_{L}}|f_{L}|^2
\end{equation}
with constants independent of $L$ and for this, it is sufficient to show that
there exists a constant $C>0$ such that for all $w\in D$
\begin{equation}\label{aimLSnormonA}
|f_{L}(w)|^2\leq \frac{C}{\vol(B(w,r/\sqrt{L}))}\int_{A_{L}\cap
B(w,r/\sqrt{L})}|f_{L}(\xi)|^2dV(\xi).
\end{equation}
Because then, \eqref{aimLSestimateonA} follows by integrating 
\eqref{aimLSnormonA} over $D$.
So we need to prove \eqref{aimLSnormonA}. Assume it is not true. This means that
for all $n\in \mathbb N$ there exists $L_{n}$, functions $f_{n}\in E_{L_{n}}$
and $w_{n}\in D_{n}=D_{\epsilon,r,f_{n}}$ such that
\[
|f_{n}(w_{n})|^2>\frac{n}{\vol(B(w_{n},r/\sqrt{L_{n}}))}\int_{A_{L_{n}}\cap
B(w_{n},r/\sqrt{L_{n}})}|f_{n}|^2dV.
\]
By the compactness of $M$, there exists $\rho_{0}=\rho_{0}(M)>0$ such that for
all $w\in M$, the exponential map, $\exp_{w}:\mathbb B(0,\rho_{0})\to
B(w,\rho_{0})$, is a diffeomorphism and $(B(w,\rho_{0}),\exp_{w}^{-1})$ is a
normal coordinate chart, where $w$ is mapped to $0$ and the metric $g$ verifies
$g_{ij}(0)=\delta_{ij}$.\\
For all $n\in \mathbb N$, take $\exp_{n}(z):=\exp_{w_{n}}(rz/\sqrt{L_{n}})$
which is defined in $\mathbb B(0,1)$ and act as follows:
\begin{align*}
\exp_{n}:\ & \mathbb B(0,1)\longrightarrow \mathbb
B(0,r/\sqrt{L_{n}})\longrightarrow B(w_{n},r/\sqrt{L_{n}})\\
& \quad z \qquad \longrightarrow \qquad \frac{rz}{\sqrt{L_{n}}} \quad
\longrightarrow \exp_{w_{n}}(rz/\sqrt{L_{n}})=:w
\end{align*}
Consider $F_{n}(z):=f_{n}(\exp_{n}(z)):\mathbb B(0,1)\to
B(w_{n},r/\sqrt{L_{n}})\overset{f_{n}}{\to}\R$ and the corresponding harmonic
extension $h_{n}$ of $f_{n}$. Set
\[
H_{n}(z,t):=h_{n}(\exp_{n}(z),rt/\sqrt{L_{n}}),
\]
 defined on $\mathbb B(0,1)\times J_{1}$ (where $J_{1}=(-1,1)$).\\
Let $\mu_{n}$ be the measure such that
$d\mu_{n}(z)=\sqrt{|g|(\exp_{w_{n}}(rz/\sqrt{L_{n}}))}dm(z)$.
Note that
\[
\int_{B(w_{n},\frac{r}{\sqrt{L_{n}}})} |f_{n}|^2dV=
\frac{r^{m}}{L^{m/2}_{n}}\int_{\mathbb B(0,1)}|F_{n}(z)|^2d\mu_{n}(z).
\]
Therefore, we have
\[
\fint_{B(w_{n},r/\sqrt{L_{n}})}|f_{n}|^2dV\approx\int_{\mathbb
B(0,1)}|F_{n}|^2d\mu_{n}.
\]
We will normalize $H_{n}$ so that
\[
\int_{\mathbb B(0,1)\times J_{1}}|H_{n}(w,s)|^2d\mu_{n}(w)ds=1.
\]
As $w_{n}\in D_{n}$, we have
\[
|F_{n}(0)|^2=|f_{n}(w_{n})|^2\geq \epsilon\fint_{B(w_{n},r/\sqrt{L_{n}})\times
I_{r}}|h_{n}(w,t)|^2dVdt
\]
\[
\approx\epsilon \int_{\mathbb B(0,1)\times
J_{1}}|H_{n}(w,s)|^2d\mu_{n}(w)ds=\epsilon.
\]
Since $|h_{n}|^2$ is subharmonic,
\[
|F_{n}(0)|^2=|h_{n}(w_{n},0)|^2\lesssim \int_{\mathbb
B(0,1)\times J_{1}}|H_{n}(w,s)|^2d\mu_{n}(w)ds=1.
\]
Hence, we have $\forall n\in\mathbb N\quad 0<\epsilon\lesssim
|F_{n}(0)|^2\lesssim 1$.\\
\\
Using the assumption,
\[
\frac{1}{n}\gtrsim \frac{1}{\vol(B(w_{n},r/\sqrt{L_{n}}))}\int_{A_{L_{n}}\cap
B(w_{n},\frac{r}{\sqrt{L_{n}}})}|f_{n}|^2dV\approx \int_{B_{n}\cap \mathbb
B(0,1)}|F_{n}|^2d\mu_{n},
\]
where $B_{n}$ is such that $\exp_{n}(B_{n}\cap \mathbb B(0,1))=A_{L_{n}}\cap
B(w_{n},r/\sqrt{L_{n}})$. So we have that
\[
\begin{cases}
	\forall n\ 0<\epsilon\lesssim|F_{n}(0)|^2\lesssim 1\\
	\forall n\ \int_{\mathbb B(0,1)\cap B_{n}}|F_{n}|^2d\mu_{n}\lesssim
\frac{1}{n}
\end{cases}.
\]
In fact, $0<\epsilon\lesssim |H_{n}(0,0)|^2\lesssim 1$ (by the definition) and
one can prove that $|H_{n}|^2\lesssim 1$. Indeed, if $(z,s)\in\mathbb
B(0,1/2)\times J_{1/2}$, let $w=\exp_{n}(z)\in B(w_{n},r/(2\sqrt{L_{n}}))$ and
$t=rs/\sqrt{L_{n}}\in I_{r/2}$. Then
\begin{align*}
|H_{n}(z,s)|^2 & =|h_{n}(w,t)|^2\lesssim \fint_{B(w,r/(2\sqrt{L_{n}}))\times
I_{r/2}(t)}|h_{n}|^2\\
& \lesssim \fint_{B(w_{n},r/\sqrt{L_{n}})\times I_{r}}|h_{n}|^2dVdt=1.
\end{align*}
Therefore, working with $1/2$ instead of $1$ we have $|H_{n}|^2\lesssim 1$ for
all $n$.\\
\\
The sequence $\left\{H_{n}\right\}_{n}$ is equicontinuous in $\mathbb
B(0,1)\times J_{1}$. Indeed, consider $(w,t)\in B(w_{n},r/(4\sqrt{L_{n}}))\times
I_{r/4}$  and $(\tilde{w},\tilde{t})\in B(w,\tilde{r}r/\sqrt{L_{n}})\times
I_{\tilde{r}r}(t)$, then there exists some small $\delta>0$ such that
\[
|h_{n}(w,t)-h_{n}(\tilde{w},\tilde{t})|\leq
\frac{\tilde{r}}{\sqrt{L_{n}}}r\sup_{B(w,\delta/\sqrt{L_{n}})\times
I_{\delta}(t)}|\nabla h_{n}|\leq (\star)
\]
Taking $\tilde{r}$ small enough so that $\delta\leq r/4$ and 
using Schoen and Yau's estimate \eqref{SchYauestimate}, we have
\[
(\star)\leq \frac{\tilde{r}r}{\sqrt{L_{n}}}\sup_{B(w_{n},r/(2\sqrt{L_{n}}))\times
I_{r/2}}|\nabla h_{n}|
\lesssim\frac{\tilde{r}r}{\sqrt{L_{n}}}\frac{1}{\frac{r}
{\sqrt{L_{n}}}}\sup_{B(w_{n},r/\sqrt{L_{n}})\times I_{r}}|h_{n}|\lesssim
\tilde{r}.
\]
So we have proved that $|h_{n}(w,t)-h_{n}(\tilde{w},\tilde{t})|\leq
C\tilde{r}$. Take $\tilde{r}$ small enough so that $C\tilde{r}<\epsilon$.
Let
$(z,s)\in \mathbb B(0,1/4)\times J_{1/4}$ and $(\tilde{z},\tilde{s})\in \mathbb
B(z,\tilde{r})\times (s-\tilde{r},s+\tilde{r})$. Consider $w=\exp_{n}(z)$,
$t=rs/\sqrt{L_{n}}$, $\tilde{w}=\exp_{n}(\tilde{z})$ and
$\tilde{t}=r\tilde{s}/\sqrt{L_{n}}$. We have proved that for all $\epsilon>0$
there
exists $\tilde{r}>0$ (small) such that for all $(z,s)\in\mathbb B(0,1/4)\times
J_{1/4}$:
\[
|H_{n}(z,s)-H_{n}(\tilde{z},\tilde{s})|<\epsilon \ \text{ if  }\
|z-\tilde{z}|<\tilde{r},\ |s-\tilde{s}|<\tilde{r} \quad \forall n.
\]
Change $1/4$ by $1$. So the sequence $H_{n}$ is equicontinuous.\\
\\
Hence the family $\left\{H_{n}\right\}_{n}$ is equicontinuous and uniformly
bounded on $\mathbb B(0,1)\times J_{1}$. Therefore, by Ascoli-Arzela's theorem
there exists a partial sequence (denoted as the sequence itself) such that
$H_{n}\to H$ uniformly on compact subsets of $\mathbb B(0,1)\times J_{1}$. Since
$F_{n}(z)=H_{n}(z,0)$, we get a function $F(z):=H(z,0):\mathbb B(0,1)\to\R$
which is the limit of $F_{n}$ (uniformly on compact subsets of $\mathbb
B(0,1)$).\\
\\
By hypothesis, the sequence $\left\{A_{L}\right\}_{L}$ is relatively dense.
Taking into account that
$\vol(B(w_{n},r/\sqrt{L_{n}}))=\frac{r^m}{L_{n}^{m/2}}\mu_{n}(\mathbb B(0,1))$,
we get that
\begin{equation}\label{hypothreldense}
\inf_{n}\mu_{n}(B_{n})\geq \rho>0,
\end{equation}
where we have denoted $B_{n}\cap \mathbb B(0,1)$ by $B_{n}$.\\
Let $\tau_{n}$ be such that $d\tau_{n}=\chi_{B_{n}}d\mu_{n}$. From a standard
argument ($\tau_{n}$ are supported in a ball), we know the existence of a weak
*-limit of a subsequence of $\tau_{n}$, denoted by $\tau$. This subsequence will
be noted as the sequence itself. Using \eqref{hypothreldense}, 
we know that $\tau$ is not identically 0. Now we have that
\[
\int_{\mathbb B(0,1)}|F|^2d\tau=0.
\]
Therefore, $F=0$ $\tau-$a.e. in $\mathbb B(0,1)$. Now for all $K\subset \mathbb
B(0,1)$ compact
\[
\int_{K}|F|^2d\tau=0,
\]
therefore $F=0$ in supp$\tau$. Let $\overline{\mathbb B(a,s)}\subset \mathbb
B(0,1)$ such that $\overline{\mathbb B(a,s)}\cap \text{supp}\tau\neq \emptyset$,
then using the fact $B_{n}\subset \mathbb B(0,1)$,
\[
\tau_{n}(\overline{\mathbb B(a,s)})\leq \int_{\overline{\mathbb
B(a,s)}}d\mu_{n}\approx
\frac{L_{n}^{m/2}}{r^{m}}\vol(B(\exp_{n}(a),sr/\sqrt{L_{n}}))\approx s^{m}.
\]
Therefore $\tau_{n}(\overline{\mathbb B(a,s)})\lesssim s^{m}$ for all $n$.
Hence, in the limit $\tau(\overline{\mathbb B(a,s)})\lesssim s^{m}$. In short,
\begin{enumerate}
 	\item We have sets $B_{n}\subset \mathbb B(0,1)$ such that 
  \[ \rho\leq
  \mu_{n}(B_{n})\leq \mu_{n}(\mathbb B(0,1))\approx 1.
  \]
 	\item We have measures $\tau_{n}$ weakly-* converging to $\tau$ 
	(not identically 0).
 	\item $\tau(\overline{\mathbb B(a,s)})\lesssim s^{m}$ for all
$\overline{\mathbb B(a,s)}\subset \mathbb B(0,1)$.
 	\item $|F|=0$ $\tau$-a.e. in $\mathbb B(0,1)$.
	\item $|F(0)|>0$ and $|F|\lesssim 1$.
\end{enumerate}
Assume we know that $H$ is real analytic, then $F(z)$ is real analytic. Federer
(\cite[Theorem 3.4.8]{Fed}) proved that the $(m-1)-$Hausdorff measure $\mathcal
H^{m-1}(F^{-1}(0))<\infty$. Hence $\mathcal H^{m-1}(\text{supp}\tau)\leq
\mathcal H^{m-1}(F^{-1}(0))<\infty$. This implies that the Hausdorff dimension
$\dim_{\mathcal H}(\text{supp}\tau)\leq m-1$. On the other hand, since
$\tau(\overline{\mathbb B(a,s)})\lesssim s^{m}$, we have
\[
0<\tau(\text{supp}\tau)\lesssim \mathcal H^{m}(\text{supp}\tau)
\]
and this implies that $\dim_{\mathcal H}(\text{supp}\tau)\geq m$ 
by Frostman's lemma. So we reach to a contradiction
and the proof is complete. We only need to check that $H$ is real analytic. In
fact, we will show that $H$ is harmonic. We have the following properties:
\begin{enumerate}
	\item Observe that the family of measure $d\mu_{n}$ converges uniformly
to the ordinary Euclidean measure because
$g_{ij}(\exp_{w_{n}}(rz/\sqrt{L_{n}}))\to
g_{ij}(\exp_{w_{0}}(0))=\delta_{ij}$, where $w_{0}$ is the limit point of some
subsequence of $w_{n}$ (recall that we are taking normal coordinate charts).
 	\item If $g_{n}(z):=g(rz/\sqrt{L_{n}})$ (i.e. $g_{n}$ is the rescaled
metric), then $\Delta_{(g_{n},Id)}H_{n}(z,s)=0$ for all $(z,s)\in \mathbb
B(0,1)\times J_{1}$, by construction.
 	\item The functions $H_{n}$ are uniformly bounded and converge
uniformly on compact subsets of $\mathbb B(0,1)\times J_{1}$.
\end{enumerate}
We are in the hypothesis of Lemma \ref{uniformlimit} that guarantees the
harmonicity of $H$ in the Euclidean sense. This concludes the proof of the
proposition.
\end{proof}

\end{document}